\newenvironment{customthm}[1]
  {\innercustomthm}
  {\endinnercustomthm}
\newenvironment{customlem}[1]
  {\innercustomlem}
  {\endinnercustomlem}
\newtheorem*{thm*}{Theorem}
\newtheorem{thm}{Theorem}
\newtheorem{lem}[thm]{Lemma}
\newtheorem{pro}[thm]{Proposition}
\newtheorem{cor}[thm]{Corollary}
\newtheorem*{PLCconj}{Partial List Coloring Conjecture}
\newtheorem{ques}[thm]{Question}
\newcommand{\N}{\mathbb{N}}
\newcommand{\col}{\mathrm{col}}
\begin{document}

\title{Partial DP-Coloring of Graphs}

\author{Hemanshu Kaul\footnote{Department of Applied Mathematics, Illinois Institute of Technology, Chicago, IL 60616. E-mail: {\tt kaul@iit.edu}} \\
Jeffrey A. Mudrock\footnote{Department of Mathematics, College of Lake County, Grayslake, IL 60030. E-mail: {\tt jmudrock@clcillinois.edu}} \\
Michael J. Pelsmajer\footnote{Department of Applied Mathematics, Illinois Institute of Technology, Chicago, IL 60616. E-mail: {\tt pelsmajer@iit.edu}} }

\maketitle

\begin{abstract}

\medskip

In 1980, Albertson and Berman introduced \emph{partial coloring}.  In 2000, Albertson, Grossman, and Haas introduced \emph{partial list coloring}.  Here, we initiate the study of partial coloring for an insightful generalization of list coloring introduced in 2015 by Dvo\v{r}\'{a}k and Postle, \emph{DP-coloring} (or \emph{correspondence coloring}).  We consider the DP-coloring analogue of the Partial List Coloring Conjecture, which generalizes a natural bound for partial coloring.  We show that while this partial DP-coloring conjecture does not hold, several results on partial list coloring can be extended to partial DP-coloring. We also study partial DP-coloring of the join of a graph with a complete graph, and we present several interesting open questions.

\noindent {\bf Keywords.}  graph coloring, list coloring, partial list coloring, DP-coloring.

\noindent \textbf{Mathematics Subject Classification.} 05C15, 05C69

\end{abstract}

\section{Introduction}\label{intro}

In this paper all graphs are nonempty, finite, simple graphs unless otherwise noted.  Generally speaking we follow West~\cite{W01} for terminology and notation.  The set of natural numbers is $\N = \{1,2,3, \ldots \}$.  For $m \in \N$, we write $[m]$ for the set $\{1, \ldots, m \}$.  If an edge in $E(G)$ connects the vertices $u$ and $v$, the edge can be represented by $uv$ or $vu$.  If $G$ is a graph and $S, U \subseteq V(G)$, we use $G[S]$ for the subgraph of $G$ induced by $S$, and we use $E_G(S, U)$ for the subset of $E(G)$ with at least one endpoint in $S$ and at least one endpoint in $U$.  We use $\alpha(G)$ and $\omega(G)$ for the size of the largest independent set and the size of the largest clique in $G$ respectively.  For $v \in V(G)$, we write $d_G(v)$ for the degree of vertex $v$ in the graph $G$, and we use $\Delta(G)$ and $\delta(G)$ for the maximum and minimum degree of a vertex in $G$ respectively.  For vertex disjoint graphs $G_1$ and $G_2$, we write $G_1 \vee G_2$ for their join. 
Given a set $A$, $\mathcal{P}(A)$ is the power set of $A$.

\subsection{Partial List Coloring}

In the classical vertex coloring problem, colors must be assigned to the vertices of a graph $G$ so that adjacent vertices receive different colors.  The assignment is a \emph{proper $m$-coloring} if colors come from an $m$-set such as $[m]$; the smallest such $m$ is the \emph{chromatic number}, denoted $\chi(G)$.
Given fewer than $\chi(G)$ colors, we might instead try to color as many vertices as possible: this is \emph{partial coloring}, introduced by Albertson and Berman~\cite{AB80} in~1980.  The \emph{partial $t$-chromatic number} of a graph $G$, denoted $\alpha_t(G)$, is the maximum number of vertices that can be colored with $t$ colors.  

List coloring is a well known variation of graph coloring, introduced independently by Vizing~\cite{V76} and Erd\H{o}s, Rubin, and Taylor~\cite{ET79} in the 1970s.
Each vertex $v$ has a list $L(v)$ of allowable colors; $L$ is called a \emph{list assignment} (or \emph{$m$-assignment} if every list has size $m$), and an \emph{$L$-coloring} (or \emph{proper $L$-coloring}) is a proper coloring where each vertex $v$ is assigned a color from its list $L(v)$.  A graph $G$ is \emph{$m$-choosable} if there is an $L$-coloring whenever $L$ is an $m$-assignment; the minimum such $m$ for a graph $G$ is the \emph{list chromatic number} of a graph $G$, denoted $\chi_\ell(G)$.

Albertson, Grossman, and Haas~\cite{AGH00} introduced \emph{partial list coloring} with a ``frankly mischievous'' intent of inciting further work.
Indeed, this has received attention in several papers~\cite{AGH00, C99, HH03, I10, J01, JM15, V93}.
Given a list assignment $L$, we want to properly $L$-color as many vertices as possible.  Let $\alpha_L(G)$ be the maximum size of a subset of vertices of $G$ that can be properly $L$-colored.\footnote{To be clear, we say that a subset of vertices $V'$ is properly $L$-colored if each vertex $v\in V'$ is assigned a color from its list $L(v)$ such that adjacent vertices receive different colors.  Thus, $\alpha_L(G)$ is the maximum order of an induced subgraph $G[S]$ that has an $L'$-coloring, where $L'$ is the list assignment $L$ restricted to $S$.}  The \emph{partial $t$-choice number} of a graph $G$, denoted $\alpha_t^\ell(G)$, is the minimum of $\alpha_L(G)$ over all $t$-assignments $L$ for $G$.\footnote{Elsewhere in the literature, the partial $t$-choice number of $G$ is denoted $\lambda_t(G)$.}

Note that $\alpha_1(G)=\alpha(G)$, $\alpha_t(G)=|V(G)|$ for $t\ge \chi(G)$, and $\alpha_t(G) \geq t|V(G)|/\chi(G)$ for $t \in [\chi(G)]$ by taking the largest $t$ color classes from a proper $\chi(G)$-coloring.
A question that has received considerable attention is whether the simple bound $\alpha_t(G) \geq t|V(G)|/\chi(G)$ can be extended to partial list coloring.

\begin{PLCconj} [\cite{AGH00}] \label{conj: PLCC}
For any graph $G$ and $t \in [\chi_\ell(G)]$,
$$\alpha_t^\ell(G) \geq \frac{t|V(G)|}{\chi_\ell(G)}.$$
\end{PLCconj}

Since $\alpha_t^\ell(G)=|V(G)|$ for $t\ge \chi_\ell(G)$, the conjecture is true for $t=\chi_\ell(G)$.
One can easily verify that $\alpha_1^\ell(G) = \alpha(G)$; hence it holds for $t=1$ as well.
While the Partial List Coloring Conjecture is still open, some weaker general lower bounds are known: for all graphs $G$ and $t\in [\chi_\ell(G)]$, we have
$\alpha_t^\ell(G) \geq |V(G)|\left(1-\left(1- \frac{1}{\chi(G)} \right)^t \right)$ (see~\cite{AGH00,V93}), $\alpha_t^\ell(G) > {\frac{6}{7}}(t|V(G)|/\chi_\ell(G))$ (see~\cite{C99}), and $\alpha_t^\ell(G) \geq |V(G)|/(\lceil \chi_\ell(G)/t \rceil)$ (see~\cite{HH03, I10}).  The Partial List Coloring Conjecture has been proven for bipartite graphs (by the bound from~\cite{AGH00,V93}), graphs $G$ with $\Delta(G) \leq \chi_\ell(G)$ (see~\cite{J01}), claw-free graphs, chordless graphs, chordal graphs, series parallel graphs, and graphs $G$ satisfying $|V(G)| \leq 2 \chi(G) + 1$ (see~\cite{JM15}).  Iradmusa~\cite{I10} also showed that for every graph $G$, $\alpha_t^\ell(G) \geq \frac{t|V(G)|}{\chi_\ell(G)}$ holds for at least half the values of $t$ in $[\chi_\ell(G)-1]$.

Note that if every graph $G$ contained a $t$-choosable induced subgraph of order at least $t|V(G)|/\chi_\ell(G)$ whenever $t \in [\chi_\ell(G)]$, it would immediately imply the Partial List Coloring Conjecture.  However, that statement does not hold true, since it is known that there is an infinite family of 3-choosable graphs $G$ for which the largest induced 2-choosable subgraph has order at most $5|V(G)|/8$ (see~\cite{JM15}).  For such graphs, if Conjecture~\ref{conj: PLCC} is indeed true, different $t$-assignments $L$ will sometimes require different vertex subsets to be colored in order to achieve the required bound.

\subsection{Partial DP-Coloring}

Our goal is to extend the spirit of mischief in~\cite{AGH00} to DP-coloring.
\emph{DP-coloring} is an insightful generalization of list coloring, introduced in 2015 by Dvo\v{r}\'{a}k and Postle~\cite{DP15}. They created DP-coloring, which they called \emph{correspondence coloring}, as part of a proof that planar graphs without cycles of lengths 4 to 8 are 3-choosable. DP-coloring has been extensively studied over the past 5 years (see e.g.,~\cite{B16,B17,BK182,BK19,BK17,BK18, BK172, KM19, KM20, KO18, KO182, LL19, LLYY19, M18}). 
Intuitively, DP-coloring considers the worst-case scenario of how many colors we need in the lists if we no longer can identify the names of the colors.  Formally, DP-colorings are defined in terms of $m$-fold covers, which are now stated, following~\cite{BK17}:  A \emph{cover} of a graph $G$ is a pair $\mathcal{H} = (L,H)$ consisting of a graph $H$ and a function $L: V(G) \rightarrow \mathcal{P}(V(H))$ satisfying the following four requirements:

\vspace{2mm}

\noindent(1) the sets $\{L(u) : u \in V(G) \}$ form a partition of $V(H)$, \\
(2) for every $u \in V(G)$, the graph $H[L(u)]$ is complete, \\
(3) for every $uv \in E(G)$, the edge set $E_H(L(u),L(v))$ is a matching (possibly empty), \\
(4) for $u,v\in V(G)$ with $uv\not\in E(G)$ and $u\not=v$, $E_H(L(u),L(v))=\emptyset$.

\vspace{2mm}

\noindent
Furthermore, $\mathcal{H}$ is an \emph{$m$-fold cover} if $|L(u)|=m$ for each $u \in V(G)$.
An \emph{$\mathcal{H}$-coloring} of $G$ is defined to be an independent set in $H$ of size $|V(G)|$, i.e., an independent set in $H$ with exactly one vertex in $L(u)$ for each $u\in V(G)$.
The \emph{DP-chromatic number} of a graph $G$, denoted $\chi_{DP}(G)$, is the smallest $m$ such that every $m$-fold cover $\mathcal{H}$ of $G$ has an $\mathcal{H}$-coloring.

DP-coloring generalizes the following model of list coloring (as explained in~\cite{BK17}): Given a graph $G$ and list assignment $L$, create a new graph $H$ such that each list $L(v)$ is a clique in $H$ and for each edge $uv$ in $G$, there is a matching from $L(u)$ to $L(v)$ that joins pairs of vertices with the same color.  Thus, independent sets in $H$ of size $|V(G)|$ correspond to proper $L$-colorings of $G$.  In particular, each edge between cliques $L(u)$ and $L(v)$ in $H$ represents one color that must be forbidden from being chosen for both $u$ and $v$. DP-coloring generalizes that model by allowing \emph{arbitrary} matchings between each pair of cliques $L(u)$ and $L(v)$.  

Since $L$-colorings for any $m$-assignment $L$ for $G$ can be modeled by $\mathcal{H}$-colorings of an $m$-fold cover $\mathcal{H}$, as described previously, $\chi_\ell(G) \leq \chi_{DP}(G)$.  This inequality can be strict; for example, $\chi_{DP}(C_n) = 3$ for any cycle $C_n$~\cite{BK17} but $\chi_\ell(C_n)=2$ when $n$ is even~\cite{ET79}.  It is also easy to show that $\chi_{DP}(G) \leq \col(G)$ where $\col(G)$ is the usual \emph{coloring number}\footnote{The coloring number of a graph $G$ is the smallest integer $d$ such that there exists an ordering $v_1, \ldots, v_n$ of the vertices of $G$ so that $v_i$ has at most $d-1$ neighbors preceeding it in the ordering for each $i \in [n]$.} of the graph $G$.  Dvo\v{r}\'{a}k and Postle~\cite{DP15} observed that Brooks' Theorem extends to DP-coloring: $\chi_{DP}(G) \leq \Delta(G)$ provided that $G$ is connected and neither a cycle nor a complete graph.

We now define partial DP-coloring so that it generalizes partial list coloring in the natural way.
Given a cover $\mathcal{H} = (L,H)$, in particular a $t$-fold-cover with $t<\chi_{DP}(G)$, we wish to $\mathcal{H}$-color as many vertices as possible; i.e., find an independent set in $H$ of size as large as possible.  Thus, we define the \emph{partial DP $t$-chromatic number} of a graph $G$, denoted $\alpha_t^{DP}(G)$, to be the minimum of $\alpha(H)$ taken over all $t$-fold covers $\mathcal{H} = (L,H)$ of $G$.

We now make some basic observations.  For DP-chromatic number and for partial DP $t$-chromatic number, it suffices to consider coverings where $E_H(L(u),L(v))$ is a perfect matching for all $uv\in E(G)$.  Note that $\alpha_t^{DP}(G)=|V(G)|$ if and only if $t \ge \chi_{DP}(G)$, $\alpha_1^{DP}(G)=\alpha(G)$, and $\alpha_t^{DP}(G) \leq \alpha_t^\ell(G) \leq \alpha_t(G)$.  For any induced subgraph $G[S]$ with DP-chromatic number at most $t$, we have $\alpha_t^{DP}(G) \ge |S|$; this leads us to ask the following question.

\begin{ques} \label{ques: induced}
For any graph $G$ and $t \in \N$, does $\alpha_t^{DP}(G)$ always equal the largest possible order of an induced subgraph of $G$ with DP-chromatic number at most $t$?
\end{ques}

We will see that the answer to Question~\ref{ques: induced} is ``no''.
Another natural question is whether the bound $\alpha_t(G)\ge t|V(G)|/\chi(G)$ has a DP-coloring analogue, similar to the Partial List Coloring Conjecture.

\begin{ques} \label{ques: PDPCC}
For any graph $G$, is it always the case that
\begin{equation}
\alpha_t^{DP}(G) \geq \frac{t|V(G)|}{\chi_{DP}(G)} \tag{$*$}
\end{equation}
for all $t \in [\chi_{DP}(G)]$?
\end{ques}

Since $\alpha_t^{DP}(G)=|V(G)|$ for $t\ge \chi_{DP}(G)$, ($*$) holds for $t=\chi_{DP}(G)$.
Since $\alpha_1^{DP}(G)=\alpha(G)$, $\chi_{DP}(G)\ge\chi(G)$, and $\alpha(G)\ge |V(G)|/\chi(G)$, ($*$) holds for $t=1$.
Thus, the answer to Question~\ref{ques: PDPCC} is yes for any graph $G$ with $\chi_{DP}(G)\le 2$,
and for a graph $G$ with $\chi_{DP}(G)=3$, the answer is yes if and only if $\alpha_2^{DP}(G) \geq \frac{2|V(G)|}{\chi_{DP}(G)}$.
However, in general the answer to Question~\ref{ques: PDPCC} is no: there are graphs $G$ for which ($*$) does not hold.
Thusly motivated, we will say that a graph is \emph{partially DP-nice} if~($*$) holds true for all $t \in [\chi_{DP}(G)]$.
We further explore these concepts in the rest of this paper.

\subsection{Outline of the Paper and Open Questions}

In Section~\ref{twofold} we study $2$-fold covers.  We will show that $\alpha_2^{DP}(V_8)=6$ for the \emph{Wagner graph} $V_8$
and that $V_8$ has no induced subgraph with DP-chromatic number 2 and order at least $2|V(V_8)|/3$, which answers Question~\ref{ques: induced}.  Additionally, this will show that $V_8$ is partially DP-nice.  We answer Question~\ref{ques: PDPCC} by presenting several examples of graphs $G$ with $\chi_{DP}(G)=3$ and $\alpha_2^{DP}(G) < 2|V(G)|/3$, including an infinite family of graphs on $5n$ vertices, and the cube graph $Q_3$.  

We also consider planar graphs and observe that any nontrivial planar graph $G$ of girth at least~5 is partially DP-nice.  

Using the aforementioned results involving $V_8$ and $Q_3$, along with results on the feedback vertex number, we are able to characterize $Q_3$ as the only connected, subcubic, triangle -free graph that is not partially DP-nice.

\begin{thm} \label{thm: subcubic}
Suppose $G$ is a connected, subcubic, triangle-free graph, and $G \neq Q_3$.  Then, $G$ is partially DP-nice.
\end{thm}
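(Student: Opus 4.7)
The plan is to bound $\chi_{DP}(G)$ via the DP-analogue of Brooks' theorem, reduce to the case $\chi_{DP}(G)=3$ and $t=2$, and then produce a small feedback vertex set, falling back on the direct computation $\alpha_2^{DP}(V_8)=6$ from Section~\ref{twofold} for the one genuine exceptional case.

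Setting $n=|V(G)|$, since $G$ is subcubic and triangle-free, either $G$ is complete (forcing $G\in\{K_1,K_2\}$ and $\chi_{DP}(G)\le 2$), or $G$ is a cycle $C_n$ with $n\ge 4$ (giving $\chi_{DP}(G)=3$), or by the DP-Brooks theorem recorded in the Introduction we have $\chi_{DP}(G)\le\Delta(G)\le 3$.  Thus $\chi_{DP}(G)\le 3$ in every case.  As noted immediately after Question~\ref{ques: PDPCC}, inequality ($*$) holds for $t=1$ and for $t=\chi_{DP}(G)$; so partial DP-niceness is automatic when $\chi_{DP}(G)\le 2$, and when $\chi_{DP}(G)=3$ it only remains to prove $\alpha_2^{DP}(G)\ge 2n/3$.

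For that step, I would exploit feedback vertex sets.  Given any feedback vertex set $S\subseteq V(G)$, the graph $G-S$ is a forest, so $\chi_{DP}(G-S)\le\col(G-S)\le 2$.  The observation just above Question~\ref{ques: induced} then yields $\alpha_2^{DP}(G)\ge |V(G)\setminus S|=n-|S|$.  Hence it suffices to produce a feedback vertex set of $G$ of size at most $n/3$.

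To finish, I would invoke a known result on the feedback vertex number of subcubic graphs asserting: every connected triangle-free subcubic graph $G$ has a feedback vertex set of size at most $n/3$, with the only exceptions being $G=Q_3$ and $G=V_8$.  Since $G\neq Q_3$, either such a small feedback vertex set exists, giving $\alpha_2^{DP}(G)\ge 2n/3$ immediately, or else $G=V_8$, and then the Section~\ref{twofold} equality $\alpha_2^{DP}(V_8)=6$, together with $6\ge 16/3=2n/3$, establishes partial DP-niceness directly.  The main obstacle in this plan is pinning down the exact feedback-vertex-set theorem with $\{Q_3,V_8\}$ as its sole exceptions among connected triangle-free subcubic graphs; once this is in hand, the rest is straightforward assembly, with the $V_8$ escape handled cleanly by the prior section's explicit computation.
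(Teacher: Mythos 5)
Your proposal is correct and follows essentially the same route as the paper: reduce via the DP version of Brooks' theorem to showing $\alpha_2^{DP}(G)\ge 2|V(G)|/3$, apply the Zheng--Lu result that $\tau(G)\le |V(G)|/3$ for connected, triangle-free, subcubic graphs with sole exceptions $V_8$ and $Q_3$, and then treat $V_8$ separately. The one step you cite as a black box, $\alpha_2^{DP}(V_8)=6$ (Lemma~\ref{pro: counterV8}), is precisely where the paper's remaining work lies --- since $\tau(V_8)=3$ the feedback-vertex-set bound yields only $5<16/3$, and the paper establishes the value $6$ via the twist-representation parity argument of Lemma~\ref{lem: sum} applied to three one-cycle induced subgraphs of $V_8$.
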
  

Since all our examples that violate~($*$) have $t=2$ and $\chi_{DP}(G)=3$, the following questions are natural.

\begin{ques} \label{ques: twofold}
Are there graphs $G$ such that $\alpha_2^{DP}(G) < 2|V(G)|/\chi_{DP}(G)$ with $\chi_{DP}(G) > 3$?
\end{ques}

\begin{ques} \label{ques: big}
For each $t \geq 4$, does there exist a graph $G$ such that $\chi_{DP}(G) = t$ and $G$ is not partially DP-nice?
\end{ques}

In Section~\ref{subadd}, we show that many of the ideas in~\cite{I10} generalize from list coloring to DP-coloring. The main tool in Section~\ref{subadd} is a subadditivity lemma.

\begin{lem} \label{lem: triangle}
	For any graph $G$ and $t_1, \ldots, t_k \in \N$, $$\alpha_{t}^{DP}(G) \leq \sum_{i=1}^k \alpha_{t_i}^{DP}(G),$$
	where $t=\sum_{i=1}^k t_i$.
\end{lem}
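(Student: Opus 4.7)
The strategy is to exploit that $\alpha_t^{DP}(G)$ is a \emph{minimum} over $t$-fold covers: to upper-bound it by $\sum_i \alpha_{t_i}^{DP}(G)$, it suffices to exhibit a single $t$-fold cover $\mathcal{H} = (L, H)$ of $G$ whose independence number is at most that sum. The natural construction is to glue together $k$ covers that individually witness the smaller partial DP chromatic numbers on the right-hand side.

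In detail, for each $i \in [k]$ pick a $t_i$-fold cover $\mathcal{H}_i = (L_i, H_i)$ of $G$ attaining $\alpha(H_i) = \alpha_{t_i}^{DP}(G)$, taken so that the vertex sets $V(H_i)$ are pairwise disjoint. Define $L(v) = \bigcup_{i=1}^k L_i(v)$ for each $v \in V(G)$, let $V(H) = \bigcup_i V(H_i)$, and let $E(H)$ consist of $\bigcup_i E(H_i)$ together with all edges between $L_i(v)$ and $L_j(v)$ for $v \in V(G)$ and $i \neq j$. Then $|L(v)| = \sum_i t_i = t$, so I just need to verify the four cover axioms. Conditions (1) and (4) are inherited from the $\mathcal{H}_i$ (no new edges were added between parts corresponding to different vertices of $G$), and (2) is forced by the inserted intra-$L(v)$ edges. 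The one condition requiring thought is (3): for $uv \in E(G)$, no edges between $L(u)$ and $L(v)$ were added in the merging step, so $E_H(L(u), L(v))$ is precisely the union over $i$ of the matchings $E_{H_i}(L_i(u), L_i(v))$; since those matchings use pairwise disjoint vertex sets (living in different layers $V(H_i)$), their union is again a matching.

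Once $\mathcal{H}$ is established as a valid $t$-fold cover, the conclusion is immediate: let $I$ be any independent set in $H$ and write $I_i = I \cap V(H_i)$. Each $I_i$ is independent in $H_i$ because $E(H_i) \subseteq E(H)$, so $|I_i| \leq \alpha(H_i) = \alpha_{t_i}^{DP}(G)$. Summing yields $|I| \leq \sum_{i=1}^k \alpha_{t_i}^{DP}(G)$, hence $\alpha_t^{DP}(G) \leq \alpha(H) \leq \sum_{i=1}^k \alpha_{t_i}^{DP}(G)$, as desired. I do not foresee any serious obstacle here; the only real subtlety is recognizing that condition (3) survives the merge because the matchings from different layers occupy disjoint vertex sets, so nothing can combine into a path of length two between $L(u)$ and $L(v)$.
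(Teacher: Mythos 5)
Your proposal is correct and follows essentially the same route as the paper: build a $t$-fold cover by taking vertex-disjoint optimal $t_i$-fold covers, merging the lists, and completing each $L(v)$ to a clique, then split any maximum independent set across the layers. Your explicit verification of cover condition~(3) is a detail the paper leaves implicit, but the argument is the same.
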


This allows us to prove Theorem~\ref{cor: ceiling}, a bound for all $t$ that is in some sense close to the inequality~$(*)$, and Theorem~\ref{cor: half the values}, which shows that the inequality~$(*)$ holds for many values of~$t$.

\begin{thm} \label{cor: ceiling}
For any graph $G$ and $t \in [\chi_{DP}(G)]$,
$$\alpha_t^{DP}(G) \geq \frac{|V(G)|}{\lceil \chi_{DP}(G)/t \rceil}.$$
\end{thm}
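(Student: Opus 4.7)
The plan is to apply the subadditivity Lemma \ref{lem: triangle} with equal parts $t_1 = \cdots = t_k = t$, where $k$ is chosen to be the smallest integer for which $kt \geq \chi_{DP}(G)$, namely $k = \lceil \chi_{DP}(G)/t \rceil$.

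First I would set $k = \lceil \chi_{DP}(G)/t \rceil$ and apply Lemma \ref{lem: triangle} to the partition $t_1 = t_2 = \cdots = t_k = t$, which has sum $kt$. This yields
$$\alpha_{kt}^{DP}(G) \leq \sum_{i=1}^{k} \alpha_{t_i}^{DP}(G) = k \cdot \alpha_t^{DP}(G).$$
Next I would use the basic observation (noted earlier in the paper) that $\alpha_s^{DP}(G) = |V(G)|$ whenever $s \geq \chi_{DP}(G)$. Since $kt \geq \chi_{DP}(G)$ by the choice of $k$, we get $\alpha_{kt}^{DP}(G) = |V(G)|$. Combining these two facts gives $|V(G)| \leq k \cdot \alpha_t^{DP}(G)$, and dividing by $k$ delivers the desired inequality
$$\alpha_t^{DP}(G) \geq \frac{|V(G)|}{k} = \frac{|V(G)|}{\lceil \chi_{DP}(G)/t \rceil}.$$

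There is essentially no obstacle here: all the work has been pushed into Lemma \ref{lem: triangle}. The only thing to double-check is that the lemma is applicable when some of the $t_i$ (or the total $\sum t_i$) exceed $\chi_{DP}(G)$, but its statement imposes only $t_i \in \N$, so this is fine. This is the DP-coloring analogue of the argument by Haxell--Hoffman and Iradmusa for list coloring, and the proof mirrors theirs once subadditivity is in place.
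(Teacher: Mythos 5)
Your proposal is correct and is essentially identical to the paper's own proof: the paper likewise sets $s = t\lceil \chi_{DP}(G)/t\rceil$, notes $\alpha_s^{DP}(G) = |V(G)|$ since $s \ge \chi_{DP}(G)$, and applies Lemma~\ref{lem: triangle} with $k = \lceil \chi_{DP}(G)/t\rceil$ equal parts to conclude. Your side remark that the lemma needs no restriction on the $t_i$ beyond $t_i \in \N$ is also accurate.
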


It follows that~($*$) holds true whenever $t$ divides $\chi_{DP}(G)$.
Hence, Question~\ref{ques: twofold} can be restricted to graphs $G$ where $\chi_{DP}(G)$ is odd.

\begin{thm} \label{cor: half the values}
For any graph $G$, the inequality $\alpha_t^{DP}(G) \geq t|V(G)|/\chi_{DP}(G)$ holds true for at least half of the values of $t$ in $[\chi_{DP}(G)-1]$.
\end{thm}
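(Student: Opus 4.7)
The plan is to use Lemma~\ref{lem: triangle} as a pairing trick: pair up each value $t \in [\chi_{DP}(G)-1]$ with its complement $\chi_{DP}(G)-t$ and show that in each such pair at least one satisfies the desired inequality. This is the natural DP-analogue of Iradmusa's argument in~\cite{I10}.

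Write $k = \chi_{DP}(G)$ and $n = |V(G)|$. First I would apply Lemma~\ref{lem: triangle} with the two-term decomposition $k = t + (k-t)$ for any $t \in [k-1]$ to obtain
\[
n = \alpha_k^{DP}(G) \;\leq\; \alpha_t^{DP}(G) + \alpha_{k-t}^{DP}(G),
\]
using that $\alpha_k^{DP}(G)=|V(G)|$ since $k = \chi_{DP}(G)$. Next I would argue by contradiction that in each pair at least one side is good: if both $\alpha_t^{DP}(G) < tn/k$ and $\alpha_{k-t}^{DP}(G) < (k-t)n/k$ held, then adding gives $\alpha_t^{DP}(G) + \alpha_{k-t}^{DP}(G) < n$, contradicting the inequality above. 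So for every $t \in [k-1]$, at least one of $t$ and $k-t$ is a value for which $(*)$ holds.

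Then I would finish by a parity count on $[k-1]$. If $k$ is odd, the $k-1$ values split into exactly $(k-1)/2$ disjoint pairs $\{t, k-t\}$, and since each pair contributes at least one good value, we get at least $(k-1)/2$ good values, which is exactly half. If $k$ is even, the values split into $(k-2)/2$ pairs plus the fixed point $t = k/2$; the pairs give at least $(k-2)/2$ good values, and the fixed point is itself good because setting $t_1 = t_2 = k/2$ in the subadditivity bound yields $2\alpha_{k/2}^{DP}(G) \geq n$, i.e., $\alpha_{k/2}^{DP}(G) \geq (k/2)n/k$. This produces at least $k/2$ good values out of $k-1$, which again is at least half.

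There is essentially no obstacle here beyond setting up the pairing correctly and handling the parity of $\chi_{DP}(G)$; the real content is already packaged into Lemma~\ref{lem: triangle}, and the rest is a clean counting argument mirroring the list-coloring version.
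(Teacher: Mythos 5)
Your proposal is correct and matches the paper's argument essentially step for step: the paper proves the same pairing dichotomy as Corollary~\ref{cor: half2} (via the identical contradiction $|V(G)| = \alpha_{s}^{DP}(G) \leq \alpha_{t}^{DP}(G) + \alpha_{s-t}^{DP}(G) < |V(G)|$ from Lemma~\ref{lem: triangle}) and then counts by applying it to each $t \le \lceil (\chi_{DP}(G)-1)/2 \rceil$. Your explicit even/odd split, including the fixed point $t = k/2$ handled by $n \le 2\alpha_{k/2}^{DP}(G)$, is just a spelled-out version of the paper's ceiling count, which absorbs that case automatically since the two disjuncts of the dichotomy coincide when $t = k/2$.
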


In Section~\ref{nice} we prove various classes of graphs are partially DP-nice, including chordal graphs and series-parallel graphs.  We also consider the join of a graph with a complete graph. Specifically, we use Bernshteyn, Kostochka, and Zhu's recent result~\cite{BK18} that for any graph $G$ there exists a threshold $N \leq 3|E(G)|$ such that $\chi_{DP}(G \vee K_p) = \chi(G \vee K_p)$ whenever $p \geq N$ to motivate and also help answer the question as to whether for such graphs, partial coloring and partial DP-coloring are similarly related.

\begin{thm} \label{thm: join}
For any graph $G$, there exists $p \in \N$ such that $G \vee K_p$ is partially DP-nice.
\end{thm}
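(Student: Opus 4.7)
The plan is to find, for each $t \in [k+p]$, an induced subgraph $H$ of $G \vee K_p$ with $\chi_{DP}(H) \le t$ and sufficiently many vertices, so that $\alpha_t^{DP}(G \vee K_p) \ge |V(H)|$. Let $n = |V(G)|$, $k = \chi(G)$, and $\alpha = \alpha(G)$. If $G$ is a complete graph then $G \vee K_p = K_{n+p}$ and the easy identity $\alpha_t^{DP}(K_{n+p}) = t = t(n+p)/(k+p)$ already makes $G \vee K_p$ partially DP-nice for every $p$, so I may assume $G$ is non-complete, whence $\alpha \ge 2$ and $n > k$. Let $N_G$ denote the threshold from Bernshteyn--Kostochka--Zhu, so that $\chi_{DP}(G \vee K_p) = k+p$ whenever $p \ge N_G$, with $N_G \le 3|E(G)|$. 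I will choose $p$ sufficiently large (bounds determined below); in particular $p \ge N_G$ ensures $\chi_{DP}(G \vee K_p) = k+p$, so it suffices to show $\alpha_t^{DP}(G \vee K_p) \ge t(n+p)/(k+p)$ for each $t \in [k+p]$.

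For $t \ge k + N_G$, I take $H = G \vee K_{t-k}$, an induced subgraph of $G \vee K_p$ since $t - k \le p$. Because $t - k \ge N_G$, BKZ gives $\chi_{DP}(H) = k + (t-k) = t$, and $|V(H)| = n + t - k$. The required inequality $n + t - k \ge t(n+p)/(k+p)$ rearranges to $(n-k)(k+p-t) \ge 0$, which is immediate from $n \ge k$ and $t \le k+p$.

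For $1 \le t \le k + N_G - 1$, I take $I \subseteq V(G)$ an independent set of size $\alpha$ and any $W \subseteq V(K_p)$ with $|W| = t-1$, and let $H$ be the induced subgraph of $G \vee K_p$ on $I \cup W$; then $H = \overline{K_{\alpha}} \vee K_{t-1}$. In any $t$-fold cover of $H$, one may first DP-color the clique $K_{t-1}$ (possible since $\chi_{DP}(K_{t-1}) = t-1 < t$); this blocks at most one color per vertex of $I$, leaving at least one color available on each, and because $I$ is independent those remaining colors can be chosen arbitrarily. Hence $\chi_{DP}(H) \le t$, and combined with $\chi_{DP}(H) \ge \chi(H) = t$ this gives $\chi_{DP}(H) = t$, so $|V(H)| = \alpha + t - 1$. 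The required inequality $\alpha + t - 1 \ge t(n+p)/(k+p)$ rearranges to $(\alpha - 1)(k+p) \ge t(n - k)$, which holds for all $t \le k + N_G - 1$ provided $p \ge (k + N_G - 1)(n - k)/(\alpha - 1) - k$. Taking $p$ to exceed the maximum of $N_G$, $k + N_G - 2$ (so that the chosen $K_{t-1}$ fits inside $K_p$), and $\lceil (k + N_G - 1)(n - k)/(\alpha - 1) \rceil - k$ thus establishes partial DP-niceness of $G \vee K_p$. The main obstacle is the intermediate range $k \le t < k + N_G$, where BKZ does not directly apply to $G \vee K_{t-k}$; shrinking $G$ down to $\overline{K_{\alpha}}$ sacrifices vertices but restores the equality $\chi_{DP} = \chi$ for the induced subgraph, and the large-$p$ regime absorbs the resulting slack.
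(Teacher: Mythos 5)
Your proposal is correct, and it takes a genuinely different route from the paper's. The paper argues by contradiction via a monotonicity device: it defines $B_p$ as the set of $t\in[k+p]$ for which ($*$) fails in $G\vee K_p$, proves $B_{p+1}\subseteq B_p$ (Proposition~\ref{pro: subset}, using the same inequality $(n+p)/(k+p)\ge(n+p+1)/(k+p+1)$ that drives your computations), so a persistent bad value $t$ would stay fixed as $p\to\infty$; choosing $p$ large enough that $t(n+p)/(k+p)<t+1$ then forces a $t$-fold cover with no independent set of size $t+1$, contradicting the fact that $G\vee K_p$ contains a non-complete induced subgraph on $t+1$ vertices, which has coloring number at most $t$ and hence is DP-colorable from any $t$-fold cover. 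Your proof is instead direct and constructive: you split the range of $t$, handling $t\ge k+N_G$ with the induced subgraph $G\vee K_{t-k}$ via the Bernshteyn--Kostochka--Zhu threshold (your reduction to $(n-k)(k+p-t)\ge 0$ is right), and handling small $t$ with the split graph $\overline{K_{\alpha}}\vee K_{t-1}$, whose $t$-fold DP-colorability you verify correctly (color the clique, then extend over the independent part, where each list of size $t$ loses at most $t-1$ vertices and no cross-edges remain); both cases rest on the same lower-bound mechanism the paper records in Section~\ref{intro}, namely $\alpha_t^{DP}(G)\ge|S|$ for any induced subgraph $G[S]$ with $\chi_{DP}(G[S])\le t$. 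What your approach buys is an explicit, computable threshold $p\ge\max\bigl(N_G,\,k+N_G-2,\,\lceil(k+N_G-1)(n-k)/(\alpha-1)\rceil-k\bigr)$ with $N_G\le 3|E(G)|$, whereas the paper's argument is phrased non-constructively (though a bound could be extracted, since $B_p\subseteq B_\mu\subseteq[k+\mu]$). What the paper's approach buys is economy: a single observation about non-complete graphs on $t+1$ vertices replaces your case split and the DP-coloring analysis of $\overline{K_{\alpha}}\vee K_{t-1}$, and the nestedness $B_{p+1}\subseteq B_p$ immediately shows that once $G\vee K_p$ is partially DP-nice it remains so for all larger $p$ (which your explicit threshold also yields).
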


\section{Two-Fold Covers} \label{twofold}

A \emph{feedback vertex set} is a set of vertices in a graph whose removal yields an acyclic graph.  The minimum size of a feedback vertex set in a graph $G$ is called the \emph{feedback vertex number} and is denoted $\tau(G)$.  An acyclic graph has coloring number at most~2, so it also has DP-chromatic number at most~2.  Since every graph $G$ has an induced acyclic subgraph of order $|V(G)|-\tau(G)$ with DP-chromatic number at most~2, we get the general bound
$\alpha_2^{DP}(G) \ge |V(G)|-\tau(G)$.

Combining this observation with known upper bounds on $\tau(G)$ yields immediate results.  If $G$ is a planar graph then $\tau(G)\leq 3|V(G)|/5$ by~\cite{B76}, so $\alpha_2^{DP}(G) \ge 2|V(G)|/5$.  There exist planar graphs with DP-chromatic number~5 (see~\cite{DP15}), so we see that such graphs satisfy inequality~($*$) for $t=2$.

Any nontrivial planar graph $G$ of girth at least~5 satisfies $\tau(G) \leq (|V(G)|-2)/3$ by~\cite{KL17} and $\chi_{DP}(G)\le 3$ by~\cite{DP15}, so $\alpha_2^{DP}(G) \geq (2|V(G)|+2)/3 \ge 2|V(G)|/\chi_{DP}(G)$ and $G$ is partially DP-nice.

We will now focus on the proof of Theorem~\ref{thm: subcubic}, which we restate.

\begin{customthm} {\bf \ref{thm: subcubic}}
	Suppose $G$ is a connected, subcubic, triangle-free graph, and $G \neq Q_3$.  Then, $G$ is partially DP-nice.
\end{customthm}

A graph is \emph{subcubic} if its maximum degree is at most~3.  So, $\chi_{DP}(G)\le 3$ for any connected, subcubic graph $G \neq K_4$.  Also, when $G$ is any connected, subcubic graph with $G \neq K_4$, $\tau(G) \leq (3|V(G)|+2)/8$ by~\cite{BH87}, which means $\alpha_2^{DP}(G) \geq (5|V(G)|-2)/8$.  Below we will present a 10-vertex connected subcubic graph $M$ with $\alpha_2^{DP}(M) = 6$, so this bound is sharp.

If $G$ is connected, subcubic, and triangle-free, then $\tau(G) \leq (|V(G)|+1)/3$ by~\cite{BH87}, and $\tau(G) \leq |V(G)|/3$ by~\cite{ZL90} unless $G$ is $V_8$ or $Q_3$.  It follows that $\alpha_2^{DP}(G) \geq 2|V(G)|/3$ and $G$ is partially DP-nice whenever $G$ is a connected, subcubic, triangle-free graph, with the possible exceptions of $V_8$ and $Q_3$.  So, to complete the proof of Theorem~\ref{thm: subcubic} we need only show that $Q_3$ is not partially DP-nice and $V_8$ is partially DP-nice.

\medskip
To get further, we will need an alternative characterization of 2-fold coverings. Consider any graph $G$ with a 2-fold cover $\mathcal{H} = (L,H)$ such that $E_H(L(u),L(v))$ is a perfect matching for all $uv\in E(G)$.  Without loss of generality, suppose that $L(v)=\{v^1,v^2\}$ for all $v\in V(G)$.~\footnote{From this point forward, whenever $\mathcal{H} = (L,H)$ is a 2-fold cover, we assume the vertices of $H$ are named in this manner.}  Then, for each $uv\in E(G)$, either $E_H(L(u),L(v))$ equals $\{u^1v^1,u^2v^2\}$ or $\{u^1v^2,u^2v^1\}$.

We define a \emph{twist representation of $\mathcal H$} to be a function $f:E(G)\rightarrow\{0,1\}$ such that $f(uv)=0$ if $E_H(L(u),L(v))=\{u^1v^1,u^2v^2\}$ and $f(uv)=1$ if $E_H(L(u),L(v))=\{u^1v^2,u^1v^2\}$.  We think of the second case as a ``twist''. This is not unique for $\mathcal H$ because the naming of elements in $L(v)$ is arbitrary; if $v^1$ and $v^2$ were switched then the value of $f$ would flip for all edges incident to $v$.  Nevertheless, we obtain the following characterization.

\begin{lem} \label{lem: sum}
Suppose that $G$ is a graph with a 2-fold cover $\mathcal{H} = (L,H)$ such that $E_H(L(u), L(v))$ is a perfect matching for each $uv \in E(G)$, and $f$ is a twist representation of $\mathcal{H}$. Then $G$ has an $\mathcal{H}$-coloring if and only if for every cycle $C$ in $G$, \[ \sum_{e\in E(C)} f(e) \equiv |E(C)|\ (\bmod\,2). \]
\end{lem}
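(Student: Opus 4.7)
The plan is to encode an $\mathcal{H}$-coloring as a $\{0,1\}$-valued function on $V(G)$ and translate the coloring condition into a linear system over $\mathbb{F}_2$. Define $\sigma\colon V(G)\to\{0,1\}$ so that the chosen vertex in $L(v)$ is $v^{\sigma(v)+1}$. From the two possible matchings on an edge $uv$, the selected vertices $u^{\sigma(u)+1}$ and $v^{\sigma(v)+1}$ are non-adjacent in $H$ if and only if $\sigma(u)+\sigma(v)\equiv 1+f(uv)\pmod 2$: when $f(uv)=0$ the matching $\{u^1v^1,u^2v^2\}$ forces $\sigma(u)\ne\sigma(v)$, and when $f(uv)=1$ the matching $\{u^1v^2,u^2v^1\}$ forces $\sigma(u)=\sigma(v)$. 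Thus an $\mathcal{H}$-coloring of $G$ exists if and only if this linear system in the variables $\{\sigma(v):v\in V(G)\}$ has a solution.

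For the forward direction, given such a $\sigma$ and any cycle $C$, I would sum the congruence $\sigma(u)+\sigma(v)\equiv 1+f(uv)\pmod 2$ over all edges of $C$. Each vertex of $C$ appears in exactly two edges of $C$, so the left-hand side vanishes modulo~$2$, leaving $|E(C)|+\sum_{e\in E(C)} f(e)\equiv 0\pmod 2$, which is the stated cycle condition.

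For the converse, I would construct $\sigma$ component by component. In each component of $G$, fix a root $r$, set $\sigma(r)=0$, and for any other vertex $u$ choose an $r$-to-$u$ path $r=v_0,v_1,\ldots,v_k=u$ and define
$$\sigma(u)\equiv k+\sum_{i=0}^{k-1} f(v_iv_{i+1})\pmod 2.$$
The main technical point is that this does not depend on the path chosen. If $P$ and $P'$ are two $r$-to-$u$ paths, then concatenating $P$ with the reverse of $P'$ yields a closed walk $W$; the support $X\subseteq E(G)$ of edges used an odd number of times in $W$ is an even subgraph, hence a symmetric difference $E(C_1)\triangle\cdots\triangle E(C_m)$ of simple cycles in $G$. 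The discrepancy between the two definitions of $\sigma(u)$ reduces modulo~$2$ to
$$|X|+\sum_{e\in X}f(e)\equiv\sum_{i=1}^m\Bigl(|E(C_i)|+\sum_{e\in E(C_i)}f(e)\Bigr)\pmod 2,$$
which vanishes by hypothesis. Hence $\sigma(u)$ is well-defined, and extending any path to $u$ by a single edge $uv$ shows that $\sigma(u)+\sigma(v)\equiv 1+f(uv)\pmod 2$ for every edge of $G$, producing an $\mathcal{H}$-coloring. The only real obstacle is the well-definedness step, which I expect to handle cleanly via this standard $\mathbb{F}_2$ cycle-space decomposition; the rest of the argument is a direct translation and telescoping sum.
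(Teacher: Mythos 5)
Your proof is correct, and while the forward direction matches the paper's, your converse takes a genuinely different route. In the forward direction both arguments are the same parity count along a cycle: the paper tracks how often the chosen index $s(v)$ flips while traversing $C$, and you sum the per-edge congruences $\sigma(u)+\sigma(v)\equiv 1+f(uv)\pmod 2$; these are the same computation in different clothing. For the converse, the paper contracts every edge $e$ with $f(e)=1$, argues that every cycle of the contracted (multi)graph is even and hence that it is bipartite, reads off the index function from the bipartition, and then uncontracts. You instead build $\sigma$ as a potential function: root each component, set $\sigma(u)$ to the parity of (length plus $f$-sum) along an $r$-to-$u$ path, and prove path-independence by decomposing the odd-multiplicity edge support of a closed walk into cycles of the $\mathbb{F}_2$ cycle space and applying the hypothesis cycle by cycle. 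Your route makes the underlying linear system over $\mathbb{F}_2$ explicit and sidesteps the paper's contraction bookkeeping (loops and multi-edges in $G'$, and the lifting of cycles of $G'$ to cycles of $G$), at the cost of invoking the standard even-subgraph decomposition, which you correctly flag as the one technical point. One small gloss worth patching: when you extend a path to $u$ by the edge $uv$, the result need not be a path if $v$ already lies on it; but your well-definedness argument applies verbatim to closed walks (the odd-multiplicity support of any closed walk is an even subgraph), so $\sigma$ is walk-independent and the edge relation $\sigma(u)+\sigma(v)\equiv 1+f(uv)\pmod 2$ follows for every edge, completing the construction of the $\mathcal{H}$-coloring.
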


\begin{proof}
Suppose that $G$ has an $\mathcal{H}$-coloring (i.e., an independent set $S$ in $H$ such that for each $v\in V(G)$, exactly one of $v^1,v^2$ is in $S$).  Let $s(v)$ represent the index of $v$ (i.e., for all $v\in V(G)$, $s(v)=1$ if $v^1\in S$ and $s(v)=2$ if $v^2\in S$).
Then for each edge $uv\in E(G)$, $s(u)\not=s(v)$ if and only if $f(uv)=0$.  
Since traversing a cycle will end where it begins, the value of $s$ must flip between~1 and~2 an even number of times;
hence, any cycle $C$ in $G$ must have an even number of edges $e$ with $f(e)=0$.  Hence, $|E(C)|$ has the same parity as the number of edges $e$ with $f(e)=1$, which equals $\sum_{e\in E(C)} f(e)$, as required.

Now let us assume that $\sum_{e\in E(C)} f(e) \equiv |E(C)|\ (\bmod\,2)$ for every cycle $C$ in $G$.  Let $G'$ be obtained from $G$ by contracting every edge $e$ with $f(e)=1$; note that $G'$ may have multiple edges and loops.  Every cycle $C'$ in $G'$ has a corresponding cycle $C$ in $G$ such that $E(C')\subseteq E(C)$.  Each edge $e\in E(C) - E(C')$ has $f(e)=1$, so $\sum_{e\in E(C)- E(C')} f(e) = |E(C) - E(C')|$.  Then $\sum_{e\in E(C')} f(e) \equiv |E(C')|\ (\bmod\,2)$. Since $\sum_{e\in E(C')} f(e)=0$, $C'$ is an even cycle.  Hence, $G'$ is bipartite.

For each $v\in V(G')$, let $s(v)=1$ or $s(v)=2$ according to its partite set.  We will uncontract edges to recover $G$. Each time we uncontract a vertex $u$ and obtain an edge $vw$, give $s(v)$ and $s(w)$ the same value as $s(u)$.  At the end, $s(v)$ is assigned a value for all $v\in V(G)$ such that $s(u)\not=s(v)$ whenever $f(uv)=0$ and $s(u)=s(v)$ whenever $f(uv)=1$.  Then $\{v^{s(v)}:v\in V(G)\}$ is an ${\mathcal H}$-coloring of $G$, as required.
\end{proof}

We often apply Lemma~\ref{lem: sum} to consider not just $\mathcal H$-colorings but partial $\mathcal H$-colorings, as follows.
Start with the hypotheses of Lemma~\ref{lem: sum} -- that $G$ is a graph with a 2-fold cover $\mathcal{H} = (L,H)$
such that $E_H(L(u), L(v))$ is a perfect matching for each $uv \in E(G)$, and $f$ is a twist representation of $\mathcal{H}$.
Next, let $G'$ be \emph{any induced subgraph} of $G$, let $H' = H[\bigcup_{v \in V(G')} L(v)]$, let $L'$ be $L$ restricted to $V(G')$, and let ${\mathcal H}'=(L', H')$, which is a 2-fold covering of $G'$.  Then restricting $f$ to $E(G')$ gives a twist representation of $\mathcal{H}'$, so $G'$ has an $\mathcal{H}'$-coloring if and only if every cycle $C$ in $G'$ satisfies
$\sum_{e\in E(C)} f(e) \equiv |E(C)|\ (\bmod\,2)$.

We are now ready to complete the proof of Theorem~\ref{thm: subcubic} by proving two lemmas that will address $Q_3$ and $V_8$ respectively.  The \emph{cube graph}~\footnote{Recall that copies of $Q_3$ are isomorphic to the graph with vertex set consisting of all bit strings of length three and edges between two such vertices if and only if the bit strings differ by a single bit.} $Q_3$ contains a cycle $C$ so $\chi_{DP}(Q_3)\ge \chi_{DP}(C)=3$ and it is subcubic so $\chi_{DP}(Q_3)\le \Delta(Q_3)=3$.  Clearly $\tau(Q_3)=3$, so $\alpha_2^{DP}(Q_3)\ge 5$.  We will use a twist representation to show that $\alpha_2^{DP}(Q_3)\le 5$.  It follows that $Q_3$ is not partially DP-nice since $\alpha_2^{DP}=5 < 2|V(Q_3)|/\chi_{DP}(Q_3)=16/3$, and the answer to Question~\ref{ques: PDPCC} is no.

\begin{lem} \label{pro: Q3}
The cube graph $Q_3$ is not partially DP-nice and $\alpha_2^{DP}(Q_3) = 5$.
\end{lem}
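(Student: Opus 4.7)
The plan is to complement the bound $\alpha_2^{DP}(Q_3) \geq 5$ already established from $\tau(Q_3) = 3$ by exhibiting a 2-fold cover $\mathcal{H} = (L, H)$ of $Q_3$ in which no induced 6-vertex subgraph is $\mathcal{H}$-colorable. Assuming $E_H(L(u), L(v))$ is a perfect matching for every $uv \in E(Q_3)$, Lemma~\ref{lem: sum} reduces the problem to constructing a twist representation $f \colon E(Q_3) \to \{0,1\}$ so that for every 6-vertex induced subgraph $G'$, some cycle $C$ of $G'$ satisfies $\sum_{e \in E(C)} f(e) \not\equiv |E(C)| \pmod{2}$. Let $T = \{e \in E(Q_3) : f(e) = 1\}$. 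Since $Q_3$ is bipartite, every cycle has even length, so the requirement becomes: every 6-vertex induced subgraph contains a cycle $C$ with $|E(C) \cap T|$ odd.

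Since $\tau(Q_3) = 3$, every 6-vertex induced subgraph already contains at least one cycle. Hence it suffices to construct $T$ so that $|E(C) \cap T|$ is odd for \emph{every} cycle $C$ of $Q_3$. The cycle space of $Q_3$ over $\mathbb{F}_2$ is spanned by the six 4-cycles $F_1, \ldots, F_6$ bounding the faces of the cube, subject only to the relation $F_1 + \cdots + F_6 = 0$. Since the functional $C \mapsto |E(C) \cap T| \bmod 2$ is $\mathbb{F}_2$-linear on the cycle space, if $|E(F_i) \cap T|$ is odd for every face $F_i$, then for any cycle $C = \sum_i c_i F_i$ we have $|E(C) \cap T| \equiv |\{i : c_i = 1\}| \pmod{2}$; the all-faces relation ensures this parity is well defined. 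A direct check shows this parity equals $1$ for every 4-cycle (face), $3$ for each of the four 6-cycles of $Q_3$, and $3$ for every 8-cycle --- all odd.

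It remains to produce a $T$ with $|E(F) \cap T|$ odd for every face, which is a small $\mathbb{F}_2$-linear system whose consistency is immediate since summing $|E(F) \cap T|$ over the six faces double-counts each edge of $T$. A concrete solution is the 3-edge matching $T = \{000\text{-}001,\ 011\text{-}111,\ 100\text{-}110\}$, using one edge from each coordinate direction; direct inspection confirms that each face of $Q_3$ meets $T$ in exactly one edge. With the twist representation $f$ defined by $f(e) = 1$ iff $e \in T$, Lemma~\ref{lem: sum} yields that no induced 6-vertex subgraph of $Q_3$ has an $\mathcal{H}$-coloring, so $\alpha_2^{DP}(Q_3) \leq 5$. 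Combined with the lower bound, $\alpha_2^{DP}(Q_3) = 5 < 16/3 = 2|V(Q_3)|/\chi_{DP}(Q_3)$, and $Q_3$ is not partially DP-nice.

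The only real obstacle is finding and verifying such a $T$; working in the 6-face generating set reduces the check for all cycles to a single parity calculation on face decompositions, avoiding case-by-case enumeration. An alternative, more brute-force route is to list the cycles of $Q_3$ (of lengths 4, 6, and 8) explicitly and verify the odd-intersection property one cycle at a time.
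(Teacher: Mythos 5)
There is a genuine gap: the property you claim to verify --- that $|E(C)\cap T|$ is odd for \emph{every} cycle $C$ of $Q_3$ --- is not merely unverified but impossible, and your ``direct check'' of it is wrong. Your own linear-algebra setup shows why: the parity of $|E(C)\cap T|$ equals the number of faces in a face-decomposition of $C$, taken mod~$2$. The symmetric difference of two \emph{adjacent} faces of the cube is itself a single 6-cycle (e.g., the faces $x_1=0$ and $x_2=0$ sum to the 6-cycle $000\text{-}010\text{-}011\text{-}001\text{-}101\text{-}100$), so once every face is odd, each such 6-cycle is forced to be \emph{even}. Indeed $Q_3$ has sixteen 6-cycles, not four: the four ``antipodal'' hexagons (what remains after deleting an antipodal vertex pair) are sums of three faces around a vertex and are odd, but the twelve adjacent-face hexagons are sums of two faces and are even. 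Concretely, the 6-cycle above meets your $T=\{000\text{-}001,\ 011\text{-}111,\ 100\text{-}110\}$ in zero edges. So the reduction ``every 6-vertex induced subgraph contains a cycle, hence it suffices to make all cycles odd'' cannot be carried out, and the parity computation you report (``$3$ for each of the four 6-cycles'') only accounts for the antipodal hexagons.

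Fortunately your specific $T$ still works, but the proof needs the case analysis you skipped --- which is exactly the structure of the paper's proof. Each vertex of $Q_3$ lies on three faces, and two vertices share two, one, or zero faces according as their distance is $1$, $2$, or $3$; hence deleting two vertices destroys all six 4-cycles only when the deleted pair is antipodal. If the pair is not antipodal, at least one face survives in the induced subgraph, and every face has odd $T$-intersection. If the pair is antipodal, the induced subgraph is precisely the antipodal hexagon, a sum of three faces, hence odd. Either way every 6-vertex induced subgraph contains a cycle $C$ with $\sum_{e\in E(C)}f(e)$ odd while $|E(C)|$ is even, so Lemma~\ref{lem: sum} applied to the restricted cover gives $\alpha_2^{DP}(Q_3)\le 5$, and with the lower bound from $\tau(Q_3)=3$ the lemma follows. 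The paper's argument uses a different twist assignment ($f=1$ on four edges rather than a 3-edge matching) but the identical two-case skeleton: all six 4-cycles odd, plus an explicit check that each opposite-corner deletion leaves a 6-cycle with one or three twisted edges. Once your verification step is repaired along these lines, your proof is correct and is essentially the paper's, with a slightly cleaner construction of the twist.
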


\begin{proof}
By the above discussion, it remains to show that $\alpha_2^{DP}(Q_3)\le 5$. Suppose we construct a copy $G$ of $Q_3$ from the following 4-cycles (vertices are written in cyclic order): $x,y,z,w,$ and $x',y',z',w'$, and we add edges that join corresponding vertices (i.e., $x$ to $x'$, $y$ to $y'$, etc.). Define $f:E(G)\rightarrow\{0,1\}$ by letting $f(xy)=f(yz)=f(zw)=f(w'x')=1$ and letting $f$ be 0 on the other 8 edges.  Suppose that $\mathcal{H}=(L,H)$ is a 2-fold cover of $G$ such that $f$ is a twist representation of $\mathcal{H}$.

Note that $G$ has six 4-cycles, each of which has an odd number of edges with $f(e)=1$. An induced subgraph on 6 vertices can omit all those cycles only by omitting a pair of ``opposite corners'': $\{x,z'\}$, $\{y,w'\}$, $\{x',z\}$, or $\{y',w\}$.  Removing such a pair yields an induced 6-cycle, and since each pair is incident to one or three edges $e$ with $f(e)=1$, removing that pair leaves a 6-cycle with three edges or one edge $e$ with $f(e)=1$.  We have shown that any induced subgraph $G'$ of $G$ on 6 vertices will contain a cycle $C$ with $\sum_{e\in E(C)} f(e) \not\equiv |E(C)|\ (\bmod\,2)$.

By applying Lemma~\ref{lem: sum} to $G'$ and the corresponding restriction of $\mathcal{H}$, we conclude that $H$ has no independent set of size 6 (i.e., $\alpha_2^{DP}(G) <6$).
\end{proof}

Finally, we answer Question~\ref{ques: induced} in the negative using the Wagner graph $V_8$, also known as the M\H{o}bius ladder graph on 8 vertices.  We may let $V(V_8)=\{v_1,\ldots,v_8\}$ and let $E(V_8)=\{v_1v_2,v_2v_3,\ldots,v_7v_8,v_8v_1\}\cup\{v_1v_5,v_2v_6,v_3v_7,v_4v_8\}$.  Note that $V_8$, like $Q_3$, is subcubic and contains a cycle so $\chi_{DP}(V_8)=3$. It is easy to check that $\tau(V_8) = 3$ (see~\cite{BH87}).

\begin{lem} \label{pro: counterV8}
$V_8$ is partially DP-nice and $\alpha_2^{DP}(V_8) \geq 6 > |V(V_8)|-\tau(V_8)$.
\end{lem}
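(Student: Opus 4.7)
The plan is to apply the twist-representation framework of Lemma~\ref{lem: sum} to a given $2$-fold cover $\mathcal{H}=(L,H)$ of $V_8$. We may assume every $E_H(L(u),L(v))$ is a perfect matching and fix a twist representation $f\colon E(V_8)\to\{0,1\}$. For each cycle $C$ in $V_8$, set $\phi(C)=\sum_{e\in E(C)}f(e)+|E(C)|\pmod 2$, so Lemma~\ref{lem: sum} says an induced subgraph $G'$ has an $\mathcal H'$-coloring if and only if $\phi(C)=0$ for every cycle $C$ in $G'$. A short parity computation using $|E(C_1\triangle C_2)|\equiv|E(C_1)|+|E(C_2)|\pmod 2$ shows that $\phi$ is a $\mathbb{Z}/2\mathbb{Z}$-linear functional on the cycle space of $V_8$.

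The key combinatorial observation, checked by direct edge enumeration, is that three specific vertex-pair removals each leave a $6$-vertex subgraph whose cycle space is $1$-dimensional, so it contains exactly one cycle. Write $Q_4$ for the $4$-cycle $v_4v_5v_1v_8$ (a ``face'' of $V_8$ using two outer edges and two diagonals). Removing $\{v_3,v_6\}$ leaves $Q_4$ as the unique cycle, with $v_2,v_7$ as pendants. Removing $\{v_1,v_3\}$ leaves the $5$-cycle $C_{13}=v_4v_5v_6v_7v_8v_4$ as the unique cycle, with $v_2$ as a pendant. Removing $\{v_2,v_4\}$ leaves the $5$-cycle $C_{24}=v_1v_5v_6v_7v_8v_1$ as the unique cycle, with $v_3$ as a pendant. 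Moreover, comparing edge sets yields $C_{13}\triangle Q_4=C_{24}$.

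The proof now splits on $\phi(Q_4)$. If $\phi(Q_4)=0$, remove $\{v_3,v_6\}$: the only cycle in the remaining subgraph is $Q_4$, which is good, so Lemma~\ref{lem: sum} supplies an $\mathcal H'$-coloring, i.e., an independent set of size $6$ in $H$. If $\phi(Q_4)=1$, linearity gives $\phi(C_{24})=\phi(C_{13})+\phi(Q_4)=\phi(C_{13})+1$, so exactly one of $\phi(C_{13}),\phi(C_{24})$ equals $0$; remove the pair corresponding to the good cycle, obtaining a $6$-vertex subgraph whose unique cycle is good. Either way, $\alpha_2^{DP}(V_8)\geq 6$.

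To finish the lemma, note that $\chi_{DP}(V_8)=3$, $|V(V_8)|=8$, and $\alpha(V_8)\geq 3$ (take $\{v_1,v_3,v_6\}$), so the inequality $\alpha_t^{DP}(V_8)\geq 8t/3$ holds trivially for $t\in\{1,3\}$ and via the bound just proved for $t=2$ (since $6\geq 16/3$); hence $V_8$ is partially DP-nice. Since $\tau(V_8)=3$, we also get $|V(V_8)|-\tau(V_8)=5<6\leq\alpha_2^{DP}(V_8)$. The main obstacle is spotting the cycle-space identity $C_{24}=C_{13}\triangle Q_4$; once that is in hand, the $\phi(Q_4)=1$ case collapses to a two-line parity argument.
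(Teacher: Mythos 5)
Your proof is correct and takes essentially the same approach as the paper: the paper also removes one vertex pair at rim-distance $3$ and two pairs at rim-distance $2$ (namely $\{v_3,v_8\}$, $\{v_6,v_8\}$, $\{v_1,v_7\}$, which are exactly your pairs under the rotation $v_i\mapsto v_{i+3}$), obtaining three unicyclic $6$-vertex induced subgraphs whose unique cycles $C_1,C_2,C_3$ satisfy $C_1\triangle C_2=C_3$. Your $\mathbb{Z}/2\mathbb{Z}$-linearity of $\phi$ on the cycle space is just a repackaging of the paper's observation that every edge of $C_1\cup C_2\cup C_3$ lies in exactly two of the three cycles, so that $\sum_{e\in E(C_1)}f(e)+\sum_{e\in E(C_2)}f(e)+\sum_{e\in E(C_3)}f(e)$ is even and at least one cycle meets the parity condition of Lemma~\ref{lem: sum}.
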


\begin{proof}
Let $G_1=V_8 - \{v_3,v_8\}$, $G_2=V_8 - \{v_6,v_8\}$, and $G_3=V_8 - \{v_1,v_7\}$.  Note that each of these contains exactly one cycle, respectively, $C_1$ with vertices (in cyclic order) of $v_1,v_2,v_6,v_5$, $C_2$ with vertices (in cyclic order) of $v_1,v_2,v_3,v_4,v_5$, and $C_3$ with vertices (in cyclic order) of $v_2,v_3,v_4,v_5,v_6$.  Note that every edge in $C_1 \cup C_2 \cup C_3$ appears in exactly two of these cycles.

Suppose that $f$ is a twist representation of an arbitrary 2-fold cover $\mathcal{H}=(L,H)$ of $V_8$.
Let $\mathcal{H}_1=(L_1,H_1)$, $\mathcal{H}_2=(L_2,H_2)$, and $\mathcal{H}_3=(L_3, H_3)$ be appropriate restrictions of $\mathcal{H}=(L,H)$ that are 2-fold covers of $G_1,G_2,G_3$, respectively.  Since every edge in $C_1 \cup C_2 \cup C_3$ appears in exactly two of these cycles,
$$\sum_{e \in E(C_1)} f(e) + \sum_{e \in E(C_2)} f(e) + \sum_{e \in E(C_3)} f(e)$$ is even.
Therefore, it cannot be that $\sum_{e \in E(C_1)} f(e)$ is odd and both $\sum_{e \in E(C_2)} f(e)$ and $\sum_{e \in E(C_3)} f(e)$ are even.  Hence, $\sum_{e\in E(C)} f(e) \equiv |E(C)|\ (\bmod\,2)$ for some $C\in \{C_1,C_2,C_3\}$.  Since each $G_i$ contains only one cycle, every cycle in some $G_i$ satisfies the equivalence, so that $G_i$ has an $\mathcal{H}_i$-coloring.  Thus, $\mathcal{H}$ has an independent set of size $|V(G_i)|=6$.  So, $\alpha_2(V_8)\ge 6$.

Then $\alpha_2(V_8)\ge 6> 16/3 = 2|V(V_8)|/\chi_{DP}(V_8)$, which means $V_8$ is partially DP-nice.
\end{proof}

This completes the proof Theorem~\ref{thm: subcubic}; that is, we can now conclude that every connected, subcubic, triangle-free graph, with the unique exception of $Q_3$, is partially DP-nice.\\

We will finish this section with a construction of  an infinite class of examples for which the answer to Question~\ref{ques: PDPCC} is no.

Let $G$ be the complete graph on 4 vertices $u,v,x,y$ with one edge $xy$ subdivided with degree~2 vertex $z$.
Note that $\tau(G)=2$, which immediately yields $\alpha^{DP}_2(G)\ge 3$.
Let $f(yz)=1$ and let $f$ be 0 on all other edges of $G$.  Suppose $\mathcal{H} = (L,H)$ is a 2-fold cover of $G$ such that $f$ is a twist representation of $\mathcal{H}$.
Note that each 3-cycle in $G$ has even sum of $f(e)$ over its edges.  Furthermore, the 4-cycles in $G$ with vertices in cyclic order of $v,x,z,y$ and $u,x,z,y$ each has odd sum of $f(e)$ over its edges.  Any induced subgraph $G'$ of $G$ on 4 vertices will contain one of those cycles.  So applying Lemma~\ref{lem: sum} to $G'$ and the corresponding restriction of $\mathcal{H}$, we conclude that $H$ has no independent set of size 4 (i.e., $\alpha_2^{DP}(G) <4$).  So, $\alpha^{DP}_2(G)=3$.

Now pick any $n\ge2$ and let $G_i$ be a copy of that same graph $G$ for $i \in [n]$ such that $V(G_i) = \{u_i, v_i, x_i, y_i, z_i \}$ and the function $f:V(G) \rightarrow V(G_i)$ given by $f(t)=t_i$ for each $t \in V(G)$ is a graph isomorphism.  Then for all $1\le i\le n-1$, add the edge $z_iu_{i+1}$, and let $G^*$ be the resulting graph.  Clearly $\tau(G^*)=2n$, so $\alpha^{DP}_2(G^*)\ge 3n$.  By letting $f(y_iz_i)=1$ for all $i\in[n]$ and letting $f$ be 0 on other edges of $G^*$, then as above we have $\sum_{e\in E(C)} f(e) \not\equiv |E(C)|\ (\bmod\,2)$ for each appropriate cycle in each $G_i$.  Since $\tau(G^*)=2n$ and $|V(G^*)|=5n$, any induced subgraph $G'$ with $3n+1$ vertices will contain one of these cycles, so applying Lemma~\ref{lem: sum} to $G'$ allows us to conclude that $\alpha_2^{DP}(G^*) < 3n+1$.  Thus, $\alpha_2^{DP}(G^*) =3n$.

Note that $G^*$ has coloring number~3, which can be seen by considering vertices in the order $u_1,v_1,x_1,y_1,z_1,u_2,\ldots,z_n$; hence $\chi_{DP}(G^*)\le 3$.
Since cycles have DP-chromatic number~3 and $G^*$ contains a cycle, $\chi_{DP}(G^*)\ge3$.  So, $\chi_{DP}(G^*) =3$.
It follows that $G^*$ is not partially DP-nice, since $\alpha_2^{DP}(G^*)=3n < 2|V(G^*)|/\chi_{DP}(G^*)=10n/3$.

\begin{cor} \label{cor: K4+}
For each $n\ge 1$, there is a graph $G^*$ on $5n$ vertices that is not partially DP-nice because $\alpha_2^{DP}(G^*) = 3n < 2|V(G^*)|/3= 10n/3$.
\end{cor}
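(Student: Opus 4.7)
The plan is to construct an explicit family of graphs $G^*$ on $5n$ vertices with $\chi_{DP}(G^*)=3$ and $\alpha_2^{DP}(G^*)=3n$, from which the inequality $3n < 10n/3$ gives the desired conclusion. First I would build a ``gadget'' graph $G$ on five vertices $u,v,x,y,z$ by taking $K_4$ on $\{u,v,x,y\}$ and subdividing the edge $xy$ with the new degree-$2$ vertex $z$. Removing $x$ and $y$ leaves an acyclic graph, so $\tau(G)\le 2$, and the general bound $\alpha_2^{DP}(G)\ge |V(G)|-\tau(G)$ gives $\alpha_2^{DP}(G)\ge 3$.

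For the matching upper bound I would invoke Lemma \ref{lem: sum} with the twist representation $f:E(G)\to\{0,1\}$ defined by $f(yz)=1$ and $f\equiv 0$ elsewhere. Under this $f$, the two triangles $uvx, uvy$ each satisfy $\sum_{e} f(e)=0\not\equiv 1 \pmod 2$, and the two $4$-cycles $(v,x,z,y)$ and $(u,x,z,y)$ each satisfy $\sum_{e} f(e)=1\not\equiv 0 \pmod 2$, so all four of these cycles fail the criterion of Lemma \ref{lem: sum}. A case check on the five possible $4$-vertex induced subgraphs (obtained by removing each vertex of $G$ in turn) shows that every one contains at least one of these offending cycles, so Lemma \ref{lem: sum} forbids any $\mathcal H$-coloring of a $4$-vertex induced subgraph, yielding $\alpha_2^{DP}(G)=3$.

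Next, I would take $n$ vertex-disjoint copies $G_1,\ldots,G_n$ of $G$ (with vertex names subscripted in the obvious way) and add the bridge edges $z_i u_{i+1}$ for $1\le i\le n-1$ to form $G^*$. The ordering $u_1,v_1,x_1,y_1,z_1,u_2,\ldots,z_n$ witnesses $\col(G^*)\le 3$, which combined with the fact that $G^*$ contains a cycle gives $\chi_{DP}(G^*)=3$. Removing the set $\{x_i,y_i : 1\le i\le n\}$ destroys all cycles, so $\tau(G^*)\le 2n$ and $\alpha_2^{DP}(G^*)\ge 3n$.

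For the matching upper bound I would extend $f$ to $G^*$ by setting $f(y_i z_i)=1$ for each $i$ and $f\equiv 0$ on all other edges (including the bridges). The key structural observation is that each bridge edge $z_i u_{i+1}$ is a cut edge of $G^*$, so every cycle of $G^*$ lies entirely inside a single copy $G_i$, and the bad-cycle analysis transfers copy-by-copy. Given any induced subgraph of $G^*$ on $3n+1$ vertices, the pigeonhole principle forces some copy $G_i$ to contain at least $4$ retained vertices, and the base case then provides an offending cycle inside that $G_i$. Lemma \ref{lem: sum} thus rules out any $\mathcal H$-coloring of size exceeding $3n$, yielding $\alpha_2^{DP}(G^*)=3n$. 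The main obstacle I anticipate is simply the case analysis verifying that every $4$-vertex induced subgraph of the base gadget $G$ contains an offending cycle; once that is in place, the pigeonhole step and the cut-edge observation make the extension to $G^*$ routine.
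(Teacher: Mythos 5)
Your proposal is correct and follows the paper's argument essentially step for step: the same subdivided-$K_4$ gadget with the twist representation $f(yz)=1$, the same case check over the five $4$-vertex induced subgraphs, the same chained construction with bridge edges $z_iu_{i+1}$, and the same lower bounds via feedback vertex sets and $\col(G^*)\le 3$. The only cosmetic difference is the final counting step, where you use pigeonhole over the $n$ copies together with the cut-edge observation to locate a bad cycle in any induced subgraph on $3n+1$ vertices (noting that a fully retained copy contains the bad triangles), whereas the paper invokes $\tau(G^*)=2n$ directly; both are valid and equally short.
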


Finally, notice that if we take $G_1$ and $G_2$ and add an edge between $z_1$ and $z_2$, we obtain a 10-vertex connected subcubic (in fact 3-regular) graph $M$ with $\alpha_2^{DP}(M) = 6$.  This demonstrates the sharpness of the bound $\alpha_2^{DP}(G) \geq (5|V(G)|-2)/8$ for any connected subcubic graph $G$ other than $K_4$, which was mentioned above.

\section{Subadditivity} \label{subadd}

Recall that Question~\ref{ques: PDPCC} asks whether the inequality
\begin{equation}
\alpha_t^{DP}(G) \geq \frac{t|V(G)|}{\chi_{DP}(G)} \tag{$*$}
\end{equation}
holds for all graphs $G$ and all $t \in [\chi_{DP}(G)]$.  
Although we have shown the answer is negative in general, 
in this section we will see how the ideas in~\cite{I10} extend to the context of DP-coloring, which yield results along the lines of Question~\ref{ques: PDPCC}.

The main tool is the following subadditivity lemma.

\begin{customlem} {\bf \ref{lem: triangle}}
For any graph $G$ and $t_1, \ldots, t_k \in \N$, $$\alpha_{t}^{DP}(G) \leq \sum_{i=1}^k \alpha_{t_i}^{DP}(G),$$
where $t=\sum_{i=1}^k t_i$.
\end{customlem}

\begin{proof}
For each $i \in [k]$, let $\mathcal{H}_i = (L_i, H_i)$ be a $t_i$-fold cover of $G$ for which $\alpha(H_i) = \alpha_{t_i}^{DP}(G)$ such that $H_1, \ldots, H_k$ are pairwise vertex disjoint.  For each $v \in V(G)$, let $L(v) = \bigcup_{i=1}^k L_i(v)$.  Let $H$ be the union of $H_1,\ldots,H_k$ with edges added so that each $L(v)$ is a clique.  Then $\mathcal{H} = (L,H)$ is a $t$-fold cover of $G$.

There is an independent set $S$ in $H$ of size $\alpha_t^{DP}(G)$.  For each $i \in [k]$, let $S_i = S \cap V(H_i)$.  Then each $S_i$ is an independent set in $H_i$,  so we have
$$ \alpha_t^{DP}(G) = |S| = \sum_{i=1}^k |S_i| \leq \sum_{i=1}^k \alpha(H_i) = \sum_{i=1}^k \alpha_{t_i}^{DP}(G).$$
\end{proof}

Lemma~\ref{lem: triangle} quickly yields all our tools and bounds.  The first tool states that verifying~$(*)$ for any one $t$ suffices to imply~$(*)$ for all divisors of $t$ as well.

\begin{cor} \label{cor: divides}
Let $G$ be a graph and $s,t \in \N$ such that $t$ divides $s$.
\[
\text{If }\,\alpha_s^{DP}(G) \geq \frac{s|V(G)|}{\chi_{DP}(G)}, \,\text{ then } \,
\alpha_t^{DP}(G) \geq \frac{t|V(G)|}{\chi_{DP}(G)}.
\]
\end{cor}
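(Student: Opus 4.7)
The plan is to apply Lemma~\ref{lem: triangle} with equal parts. Since $t$ divides $s$, I would write $s = kt$ for some positive integer $k = s/t$, and then invoke Lemma~\ref{lem: triangle} with $t_1 = t_2 = \cdots = t_k = t$ (so that $\sum_{i=1}^k t_i = kt = s$). This directly yields
\[
\alpha_s^{DP}(G) \;\leq\; \sum_{i=1}^k \alpha_t^{DP}(G) \;=\; k \cdot \alpha_t^{DP}(G).
\]

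Next I would rearrange and substitute the hypothesis. Dividing the above by $k$ gives $\alpha_t^{DP}(G) \geq \alpha_s^{DP}(G)/k$. Using the assumption $\alpha_s^{DP}(G) \geq s|V(G)|/\chi_{DP}(G)$ together with $s/k = t$, I obtain
\[
\alpha_t^{DP}(G) \;\geq\; \frac{\alpha_s^{DP}(G)}{k} \;\geq\; \frac{s|V(G)|}{k \cdot \chi_{DP}(G)} \;=\; \frac{t|V(G)|}{\chi_{DP}(G)},
\]
which is exactly the conclusion.

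There is no real obstacle here: the entire argument is a one-line application of the subadditivity lemma to the partition $s = t + t + \cdots + t$, followed by arithmetic. The only thing to verify is that $k$ is a positive integer, which is ensured by the hypothesis that $t$ divides $s$ (together with $s, t \in \N$). I would present this as a short, clean corollary immediately following the proof of Lemma~\ref{lem: triangle}.
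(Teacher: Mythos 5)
Your proof is correct and is essentially identical to the paper's own argument: both apply Lemma~\ref{lem: triangle} to the decomposition $s = t + \cdots + t$ (with $k = s/t$ parts) to get $\alpha_s^{DP}(G) \leq k\,\alpha_t^{DP}(G)$, then divide by $k$ and substitute the hypothesis. Nothing further is needed.
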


\begin{proof}
Let $k$ be the integer such that $s=kt$.  By Lemma~\ref{lem: triangle}, $\alpha_s^{DP}(G) \leq k \alpha_t^{DP}(G).$  Then
$\alpha_t^{DP}(G) \geq (1/k)s|V(G)|/\chi_{DP}(G) = t|V(G)|/\chi_{DP}(G)$ as desired.
\end{proof}

Note that applying Corollary~\ref{cor: divides} with $t=2$ and $s=\chi_{DP}(G)$  shows that when $\chi_{DP}(G)$ is even,
inequality~($*$) holds true.  Hence, Question~\ref{ques: twofold} can be restricted to graphs $G$ where $\chi_{DP}(G)$ is odd; this also follows from Theorem~\ref{cor: ceiling}.  Theorem~\ref{cor: ceiling} is similar to what was asked for by Question~\ref{ques: twofold}, and in many cases the bound is close to~$(*)$.

\begin{customthm} {\bf \ref{cor: ceiling}}
For any graph $G$ and $t \in [\chi_{DP}(G)]$,
$$\alpha_t^{DP}(G) \geq \frac{|V(G)|}{\lceil \chi_{DP}(G)/t \rceil}.$$
\end{customthm}

\begin{proof}
Let $s = t \lceil \chi_{DP}(G)/t \rceil$.
Since $s\ge \chi_{DP}(G)$, $\alpha_s^{DP}(G)=|V(G)|$.
By Lemma~\ref{lem: triangle}, $\alpha_s^{DP}(G) \leq{\lceil \chi_{DP}(G)/t \rceil}\alpha_t^{DP}(G)$.
The desired result follows.
\end{proof}

The next result implies Theorem~\ref{cor: half the values}, which states that at least half the elements $t\in [\chi_{DP}(G)-1]$ satisfy inequality~($*$).

\begin{cor} \label{cor: half2}
For a graph $G$ and an integer $t$ with $1 \le t < s = \chi_{DP}(G)$, either
$$\alpha_t^{DP}(G) \geq \frac{t}{s}|V(G)| \quad \text{or} \quad \alpha_{s-t}^{DP}(G) \geq \frac{s-t}{s}|V(G)|.$$
\end{cor}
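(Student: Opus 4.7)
The plan is to apply Lemma~\ref{lem: triangle} directly with $k = 2$ and the partition $s = t + (s-t)$, and then observe that the two target fractions $t/s$ and $(s-t)/s$ sum to $1$, so they cannot both fail simultaneously.

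First, I would note that since $s = \chi_{DP}(G)$ we have $\alpha_s^{DP}(G) = |V(G)|$ by the basic observations about partial DP-chromatic numbers given in the introduction. Next, by Lemma~\ref{lem: triangle} with $t_1 = t$ and $t_2 = s - t$ (both positive integers because $1 \le t < s$), we obtain
\[
|V(G)| \;=\; \alpha_s^{DP}(G) \;\le\; \alpha_t^{DP}(G) + \alpha_{s-t}^{DP}(G).
\]

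To finish, I would argue by contradiction: suppose both inequalities in the statement fail. Then
\[
\alpha_t^{DP}(G) < \frac{t}{s}|V(G)| \quad\text{and}\quad \alpha_{s-t}^{DP}(G) < \frac{s-t}{s}|V(G)|,
\]
and adding these strict inequalities yields $\alpha_t^{DP}(G) + \alpha_{s-t}^{DP}(G) < |V(G)|$, which contradicts the subadditivity bound above. Therefore at least one of the two inequalities must hold, which is exactly the conclusion.

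There is no real obstacle here; the argument is a one-line consequence of Lemma~\ref{lem: triangle} once one notices the complementary nature of the fractions $t/s$ and $(s-t)/s$. The only minor care needed is to ensure that the partition is valid, i.e., that $s - t \ge 1$, which follows from the hypothesis $t < s$. To then deduce Theorem~\ref{cor: half the values}, I would pair up elements $t \in [s-1]$ with their complements $s - t$: each such pair contributes at least one value satisfying~($*$), and when $s$ is even the fixed point $t = s/2$ satisfies~($*$) by Corollary~\ref{cor: divides}, giving at least $\lceil (s-1)/2 \rceil$ valid values.
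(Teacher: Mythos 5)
Your proof is correct and matches the paper's argument essentially verbatim: the paper also applies Lemma~\ref{lem: triangle} with the two-part decomposition $s = t + (s-t)$, uses $\alpha_s^{DP}(G) = |V(G)|$, and derives the same contradiction from assuming both inequalities fail. (Your closing remark on Theorem~\ref{cor: half the values} also mirrors the paper, which likewise pairs $t$ with $s-t$ to extract $\lceil (\chi_{DP}(G)-1)/2 \rceil$ valid values.)
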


\begin{proof}
For a contradiction, suppose that $\alpha_t^{DP}(G) < \frac{t}{s}|V(G)|$ and $\alpha_{s-t}^{DP}(G) <\frac{s-t}{s}|V(G)|.$  Then by Lemma~\ref{lem: triangle}, we obtain the contradiction:
$$|V(G)| = \alpha_{s}^{DP}(G) \leq \alpha_{t}^{DP}(G) + \alpha_{s-t}^{DP}(G) < \left({\frac{t}{s} + \frac{s-t}{s}}\right)|V(G)| = |V(G)|.$$\end{proof}

Applying Corollary~\ref{cor: half2} to each $t$ with $1\le t\le \lceil{(\chi_{DP}(G)-1)/2}\rceil$ will yield distinct $t$-values (either $t$ or $\chi_{DP}(G)-t$) that satisfies~($*$), for a total of $\lceil{(\chi_{DP}(G)-1)/2}\rceil$ such $t$-values.
Thus, Theorem~\ref{cor: half the values} is now proven.

\section{Partially DP-nice Graphs} \label{nice}

Recall that a graph $G$ is \emph{partially DP-nice} if~($*$) holds true for all $t \in [\chi_{DP}(G)]$.  Each partially DP-nice graph represents partial progress toward Question~\ref{ques: PDPCC}, which asked whether all graphs might be partially DP-nice.  While many are not, in this section we show that both chordal graphs and series-parallel graphs are, and we also prove Theorem~\ref{thm: join}:
for any graph $G$, there exists a $p \in \N$ such that $G \vee K_p$ is partially DP-nice.

\subsection{Chordal Graphs and Series Parallel Graphs}

Recall that a graph family $\mathcal{G}$ is a \emph{hereditary graph family} if it is closed under taking induced subgraphs.

\begin{pro} \label{pro: hereditary}
Suppose that $\mathcal{G}$ is a hereditary graph family such that for each $G \in \mathcal{G}$, $\chi(G) = \chi_{DP}(G)$.  Then every $G \in \mathcal{G}$ is partially DP-nice.
\end{pro}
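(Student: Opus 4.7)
The plan is to follow the same ``pick the largest color classes'' strategy that gives the bound $\alpha_t(G) \geq t|V(G)|/\chi(G)$ for ordinary partial coloring, and show that the hereditary hypothesis lets this bound transfer to the DP setting. Fix $G \in \mathcal{G}$, set $k = \chi_{DP}(G) = \chi(G)$, and let $t \in [k]$. First I would take a proper $k$-coloring of $G$ with color classes $C_1,\ldots,C_k$ of sizes summing to $|V(G)|$, and let $S$ be the union of the $t$ color classes of largest size; an averaging argument gives $|S| \geq t|V(G)|/k$.

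Next I would invoke the hereditary hypothesis: $G[S] \in \mathcal{G}$, so $\chi_{DP}(G[S]) = \chi(G[S])$. Since $G[S]$ inherits a proper coloring from the $t$ chosen classes, $\chi(G[S]) \leq t$, and therefore $\chi_{DP}(G[S]) \leq t$.

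Finally, I would show $\alpha_t^{DP}(G) \geq |S|$ by fixing an arbitrary $t$-fold cover $\mathcal{H} = (L,H)$ of $G$, restricting it to obtain the $t$-fold cover $\mathcal{H}' = (L|_S,\, H[\bigcup_{v \in S} L(v)])$ of $G[S]$, and applying $\chi_{DP}(G[S]) \leq t$ to get an $\mathcal{H}'$-coloring. This $\mathcal{H}'$-coloring is an independent set of size $|S|$ in $H$, and since $\mathcal{H}$ was arbitrary this yields $\alpha_t^{DP}(G) \geq |S| \geq t|V(G)|/\chi_{DP}(G)$, as required.

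There is no real obstacle here: the work was done upstream by the hypothesis that DP-chromatic number equals chromatic number throughout $\mathcal{G}$. The only thing to be careful about is the direction of the argument in the last step, namely that an induced subgraph with small $\chi_{DP}$ does force $\alpha_t^{DP}$ of the ambient graph to be large, via restriction of covers. That subtlety (which is exactly the comment just before Question~\ref{ques: induced}) is what makes the hereditary hypothesis, rather than merely $\chi_{DP}(G) = \chi(G)$, the natural assumption.
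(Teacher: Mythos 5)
Your proposal is correct and follows essentially the same argument as the paper: take the union $S$ of the $t$ largest color classes of a proper $\chi(G)$-coloring, use the hereditary hypothesis to conclude $\chi_{DP}(G[S]) = \chi(G[S]) \le t$, and then restrict an arbitrary $t$-fold cover of $G$ to $G[S]$ to extract an independent set of size $|S| \ge t|V(G)|/\chi_{DP}(G)$. Your closing remark about the direction of the restriction step is exactly the point the paper relies on as well, so there is nothing to add.
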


\begin{proof}
Let $\mathcal{G}$ be such a graph family. Suppose that $G \in \mathcal{G}$ and let $k=\chi(G) = \chi_{DP}(G)$.
Consider any $t\in [k]$ and any $t$-fold cover $\mathcal{H}=(L,H)$ of $G$.

Consider a proper $k$-coloring of $G$.  Let $S$ be the union of $t$ of the largest color classes.  Then $S$ is a subset of $V(G)$ with $|S|\ge (t/k)|V(G)|$.  Let $\mathcal{H}'=(L', H')$ be the corresponding $t$-fold cover of $G[S]$, i.e., let $L'$ be $L$ restricted to $S$ and let $H'=H[\bigcup_{v \in S} L(v)]$.  Note that $\chi(G[S]) = \chi_{DP}(G[S])$ since $G\in\mathcal{G}$.  So, $H'$ contains an independent set $S'$ of size $|S|$.  Since $H'$ is an induced subgraph of $H$, $S'$ is also an independent set in $H$.  Hence, $\alpha^{DP}_t(G) \ge |S'| \ge (t/k)|V(G)|$ and since $t$ was arbitrarily chosen from $[\chi_{DP}(G)]$, we have that $G$ is partially DP-nice.
\end{proof}

Chordal graphs are such a hereditary graph family, as $\chi(G)=\omega(G)\le \chi_{DP}(G)\le \chi(G)$ for any chordal graph (see~\cite{KM19}). Recall that a graph $G$ is \emph{chordal} if every cycle $C$ in $G$ has a \emph{chord}, which is an edge with endpoints on nonconsecutive vertices of $C$.  Thus, we get the following result.

\begin{cor} \label{cor: chordal}
Chordal graphs are partially DP-nice.
\end{cor}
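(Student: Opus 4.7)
My plan is to deduce this corollary directly from Proposition~\ref{pro: hereditary} by verifying its two hypotheses for the class of chordal graphs. First, I would observe that the class of chordal graphs is hereditary: if $G$ is chordal and $S\subseteq V(G)$, then every cycle in $G[S]$ is a cycle in $G$ and a chord in $G$ between two vertices of $S$ is also a chord in $G[S]$, so $G[S]$ is chordal. This is standard, so I would state it in one sentence.

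Second, I need to show that $\chi_{DP}(G)=\chi(G)$ for every chordal graph $G$. The paper has already cited that $\omega(G)\le \chi_{DP}(G)\le \chi(G)$ holds for any chordal graph (from \cite{KM19}), and the classical fact that chordal graphs are perfect gives $\chi(G)=\omega(G)$. Together these yield $\chi_{DP}(G)=\chi(G)$. If I wanted a self-contained argument, I would instead invoke the perfect elimination ordering of a chordal graph: ordering the vertices so that each vertex's later neighborhood is a clique shows $\col(G)\le \omega(G)$, and combining with $\chi_{DP}(G)\le \col(G)$ (stated in Section~\ref{intro}) and the trivial $\omega(G)\le \chi(G)\le \chi_{DP}(G)$ gives equality throughout.

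With both hypotheses verified, Proposition~\ref{pro: hereditary} applied to $\mathcal{G}$ equal to the class of chordal graphs immediately yields that every chordal graph is partially DP-nice. I do not anticipate any obstacle here: the entire proof is essentially a one-line invocation of Proposition~\ref{pro: hereditary} once the two ingredients (heredity of chordality and the equality $\chi_{DP}=\chi$ on chordal graphs) are noted. The only mildly technical point is recording the reason that $\chi_{DP}(G)=\chi(G)$ on chordal graphs, which I would handle by citing \cite{KM19} as the paper already does.
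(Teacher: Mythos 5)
Your proposal is correct and takes essentially the same route as the paper, which proves the corollary in one line by applying Proposition~\ref{pro: hereditary} after noting that chordal graphs form a hereditary family with $\chi(G)=\omega(G)\le\chi_{DP}(G)\le\chi(G)$ (citing~\cite{KM19}). Your optional self-contained justification via the perfect elimination ordering and $\chi_{DP}(G)\le\col(G)$ is sound but not needed beyond the citation.
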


Given any graph $G$, its treewidth $tw(G)$ can be defined in terms of a chordal graph $M$ formed by adding edges to $G$ so that $M$ has smallest possible clique number; then $tw(G)=\omega(M)-1$.  For example, $tw(G)\le 1$ if and only if a graph is a forest.
Note that $\chi_{DP}(G) \le \chi_{DP}(M) = \omega(M) = tw(G)+1$.

\begin{pro} \label{pro: treewidth}
If $G$ is a graph with $\chi_{DP}(G)=tw(G)+1$, then $G$ is partially DP-nice.
\end{pro}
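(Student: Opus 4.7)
The plan is to leverage Corollary~\ref{cor: chordal} via the chordal completion $M$ used to define $tw(G)$. From the discussion immediately preceding the proposition we have $V(M)=V(G)$, $E(G)\subseteq E(M)$, $M$ is chordal, and $\chi_{DP}(M)=\omega(M)=tw(G)+1$. The hypothesis $\chi_{DP}(G)=tw(G)+1$ therefore forces $\chi_{DP}(M)=\chi_{DP}(G)$. Since $M$ is chordal, Corollary~\ref{cor: chordal} asserts that $M$ is partially DP-nice, so
$$\alpha_t^{DP}(M)\ \ge\ \frac{t\,|V(M)|}{\chi_{DP}(M)}\ =\ \frac{t\,|V(G)|}{\chi_{DP}(G)} \qquad \text{for every } t\in[\chi_{DP}(M)].$$

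To transfer this bound to $G$, I would fix any $t\in[\chi_{DP}(G)]$ and any $t$-fold cover $\mathcal{H}=(L,H)$ of $G$, and extend $\mathcal{H}$ to a $t$-fold cover $\mathcal{H}'=(L,H')$ of $M$ as follows: keep $V(H)$ and every existing edge of $H$, and for each new edge $uv\in E(M)\setminus E(G)$, add an arbitrary perfect matching between $L(u)$ and $L(v)$ to $H'$. A routine check against the four cover axioms confirms that $\mathcal{H}'$ is indeed a $t$-fold cover of $M$: the partition of $V(H')=V(H)$ by $\{L(u)\}$ is unchanged, each $H'[L(u)]$ remains complete, every edge of $M$ now carries a matching between fibers, and no non-edges of $M$ do.

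Because $H$ is a spanning subgraph of $H'$, every independent set in $H'$ is still independent in $H$, hence
$$\alpha(H)\ \ge\ \alpha(H')\ \ge\ \alpha_t^{DP}(M)\ \ge\ \frac{t\,|V(G)|}{\chi_{DP}(G)}.$$
Taking the minimum over all $t$-fold covers $\mathcal{H}$ of $G$ yields $\alpha_t^{DP}(G)\ge t|V(G)|/\chi_{DP}(G)$, and since $t\in[\chi_{DP}(G)]$ was arbitrary, $G$ is partially DP-nice.

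There is no serious obstacle: the entire argument is a transfer principle built on the observation that passing to a supergraph only enlarges the cover graph $H$ and therefore can only decrease its independence number. The only point that deserves a sentence of verification is the validity of the extension $\mathcal{H}\mapsto\mathcal{H}'$ as a cover of $M$, which is immediate from the cover axioms. The hypothesis $\chi_{DP}(G)=tw(G)+1$ is used precisely to ensure that the bound coming from $\alpha_t^{DP}(M)$ matches the target bound $t|V(G)|/\chi_{DP}(G)$.
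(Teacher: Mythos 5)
Your proof is correct and takes essentially the same route as the paper's: both pass to the chordal completion $M$ with $\chi_{DP}(M)=\omega(M)=tw(G)+1=\chi_{DP}(G)$ and invoke Corollary~\ref{cor: chordal} to transfer the bound back to $G$. The only difference is that the paper skips your matching-extension step entirely, observing that any $t$-fold cover of $G$ is \emph{already} a $t$-fold cover of $M$, since cover axiom (3) allows the matching on each edge of $E(M)\setminus E(G)$ to be empty.
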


\begin{proof}
Let $M$ be a chordal graph obtained by adding edges to $G$ such that $\omega(M) = tw(G)+1$.
Since $\chi_{DP}(G)\le \chi_{DP}(M) = \omega(M)$ and $\chi_{DP}(G)=tw(G)+1$, all these are equal.
Let $t\in[\chi_{DP}(G)]$ and let $\mathcal{H} = (L,H)$ be an arbitrary $t$-fold cover of $G$.  Note that $\mathcal{H}$ is also a $t$-fold cover of $M$.
By Corollary~\ref{cor: chordal}, $H$ has an independent set of size at least $t|V(M)|/\chi_{DP}(M)$, which equals
$t|V(G)|/\chi_{DP}(G)$ as required.
\end{proof}

\emph{Series-parallel graphs} are the graphs with treewidth at most 2.  A series parallel graph $G$ that contains a cycle $C$ has $\chi_{DP}(G)\ge \chi_{DP}(C)=3$, so Proposition~\ref{pro: treewidth} applies, showing that $G$ is partially DP-nice.  Any acyclic graph $G$ has coloring number at most~2, so $\chi_{DP}(G)\le 2$, which we have noted means that it must be partially DP-nice.

\begin{cor}
Series-parallel graphs are partially DP-nice.
\end{cor}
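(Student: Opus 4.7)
The plan is to split into two cases according to whether the series-parallel graph $G$ contains a cycle, applying Proposition~\ref{pro: treewidth} in the nontrivial case and using a direct observation in the trivial case.

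First I would handle the case where $G$ is acyclic. Then $G$ is a forest, so its coloring number is at most $2$, and hence $\chi_{DP}(G) \le 2$. As noted already in the introduction, any graph with DP-chromatic number at most $2$ is automatically partially DP-nice: the inequality~($*$) is trivial at $t = \chi_{DP}(G)$ because $\alpha_{\chi_{DP}(G)}^{DP}(G) = |V(G)|$, and at $t = 1$ it reduces to $\alpha(G) \ge |V(G)|/\chi_{DP}(G)$, which follows from $\chi_{DP}(G) \ge \chi(G)$ together with $\alpha(G) \ge |V(G)|/\chi(G)$.

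Next I would handle the case where $G$ contains a cycle $C$. Since $G$ is series-parallel, $tw(G) \le 2$, so $\chi_{DP}(G) \le tw(G)+1 \le 3$. On the other hand, $\chi_{DP}(G) \ge \chi_{DP}(C) = 3$ because every cycle has DP-chromatic number $3$. Therefore $\chi_{DP}(G) = 3 = tw(G)+1$, so Proposition~\ref{pro: treewidth} applies and yields that $G$ is partially DP-nice.

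There is essentially no obstacle beyond organizing these observations correctly: the heavy lifting was done in Proposition~\ref{pro: hereditary} for chordal graphs and in Proposition~\ref{pro: treewidth}, which bootstraps partial DP-niceness from the chordal completion. The only mild subtlety is being careful that the acyclic subcase really does satisfy~($*$) for every $t \in [\chi_{DP}(G)]$, but this reduces to the $t=1$ and $t=\chi_{DP}(G)$ boundary cases that the paper has already checked in general.
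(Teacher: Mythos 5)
Your proof is correct and follows essentially the same route as the paper: the cyclic case is handled by noting $\chi_{DP}(G)\ge\chi_{DP}(C)=3$ together with $\chi_{DP}(G)\le tw(G)+1\le 3$ so that Proposition~\ref{pro: treewidth} applies, and the acyclic case is handled by observing $\chi_{DP}(G)\le 2$, which the paper had already noted forces partial DP-niceness via the $t=1$ and $t=\chi_{DP}(G)$ cases. No gaps to report.
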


\subsection{Join of a Graph with a Complete Graph}\label{sec: join}

Interestingly, partial DP-coloring gets easier when we join a vertex to a graph and the DP-chromatic number of the resulting graph is higher than the DP-chromatic number of the original graph.
Proposition~\ref{pro: easier} illustrates this idea.  First, we need a basic result.

\begin{lem} \label{lem: one vertex}
If $v$ is a vertex in a graph $G$, then $\chi_{DP}(G)-1 \le \chi_{DP}(G-v) \le  \chi_{DP}(G)$.
\end{lem}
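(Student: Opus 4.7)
The lemma has two inequalities to verify, which I would handle separately by constructing covers.

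\textbf{Upper bound} $\chi_{DP}(G-v) \leq \chi_{DP}(G)$. Let $m=\chi_{DP}(G)$. I would start with an arbitrary $m$-fold cover $\mathcal{H}'=(L',H')$ of $G-v$ and extend it to an $m$-fold cover $\mathcal{H}=(L,H)$ of $G$ by adjoining a fresh set $L(v)$ of $m$ new vertices, making $H[L(v)]$ complete, and inserting an arbitrary perfect matching between $L(v)$ and $L'(u)=L(u)$ for each neighbor $u$ of $v$ in $G$; the four cover axioms are immediate. Since $m=\chi_{DP}(G)$, $\mathcal{H}$ admits an $\mathcal{H}$-coloring $S$, and $S\setminus L(v)$ is then an $\mathcal{H}'$-coloring of $G-v$, giving $\chi_{DP}(G-v)\le m$.

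\textbf{Lower bound} $\chi_{DP}(G)-1 \leq \chi_{DP}(G-v)$. Setting $m=\chi_{DP}(G-v)$, I would show $\chi_{DP}(G)\le m+1$: fix any $(m+1)$-fold cover $\mathcal{H}=(L,H)$ of $G$ and construct an $\mathcal{H}$-coloring. Pick an arbitrary $x\in L(v)$. For each neighbor $u$ of $v$ in $G$, at most one vertex $y_u\in L(u)$ is matched to $x$ in $H$ by condition (3); I would delete $y_u$ from $L(u)$ if it exists, and otherwise delete an arbitrary vertex of $L(u)$. For each remaining vertex $u\in V(G-v)$ not adjacent to $v$, delete one arbitrary vertex of $L(u)$. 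Calling the resulting lists $L'$ (each of size $m$) and letting $H'=H[\bigcup_{u\ne v} L'(u)]$, I claim $\mathcal{H}'=(L',H')$ is an $m$-fold cover of $G-v$: the partition and clique conditions survive trivially, and each edge set $E_{H'}(L'(u),L'(w))$ is a submatching of $E_H(L(u),L(w))$, hence still a matching (or empty).

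By hypothesis $\mathcal{H}'$ admits an $\mathcal{H}'$-coloring $S'$; then $S'\cup\{x\}$ is independent in $H$, since by construction no neighbor of $x$ lies in any $L'(u)$. This $\mathcal{H}$-coloring of $G$ establishes $\chi_{DP}(G)\le m+1$.

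\textbf{Expected obstacle.} The only subtlety is verifying that the trimmed cover $\mathcal{H}'$ in the lower-bound direction genuinely satisfies all four cover axioms with uniform list size $m$; but because we only ever \emph{delete} vertices from the $L(u)$ (never add edges), all four conditions are inherited from $\mathcal{H}$, and the enforced deletion of $y_u$ is exactly what guarantees $x$ has no neighbor in $\bigcup L'(u)$. Neither direction should require any further machinery beyond the definitions.
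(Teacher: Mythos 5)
Your proof is correct and follows essentially the same route as the paper: the lower-bound direction---fixing $x \in L(v)$ in an $(m+1)$-fold cover of $G$, deleting the matched partner of $x$ from each neighbor's list (and an arbitrary vertex from every other list) to obtain an $m$-fold cover of $G-v$, then adjoining $x$ to the resulting coloring---is exactly the paper's argument. The only difference is cosmetic: the paper dismisses the upper bound $\chi_{DP}(G-v) \le \chi_{DP}(G)$ as clear, while you spell out the standard extension-and-restriction argument, which is a valid justification of that step.
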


\begin{proof}
Clearly, $\chi_{DP}(G-v)\le  \chi_{DP}(G)$.  So, we must show that $\chi_{DP}(G-v) \ge \chi_{DP}(G)-1$.

Let $t=1+ \chi_{DP}(G-v)$ and let $\mathcal{H}=(L,H)$ be any $t$-fold cover of $G$.
Pick a vertex $v'\in L(v)$.  Note that $v'$ has at most one neighbor in each $L(u)$ with $u\not=v$.  Construct $H'$ from $H$ by removing: all of $L(v)$, the neighbors of $v'$ in lists $L(u)$ with $u\not=v$, and one vertex from each list $L(u)$ that has had nothing removed from it yet.  Let $L'$ be $L$ with the same vertices removed and domain $V(G)-v$.  Let $\mathcal{H}'=(L',H')$.  Then $\mathcal{H}'$ is a $(t-1)$-fold cover of $G-v$.
Since $t-1= \chi_{DP}(G-v)$, there is an $\mathcal{H}'$-coloring of $G'$, which is an independent set $S'$ in $H'$ of size $|V(G-v)|=|V(G)|-1$.
Then $S'\cup\{v'\}$ is an independent set in $H$ and it is an $\mathcal{H}$-coloring of $G$.  It follows that $\chi_{DP}(G)\le t$ as required.
\end{proof}

\begin{pro} \label{pro: easier}
Suppose that a graph $G$ is partially DP-nice.  Let $G' = G \vee K_1$.  If $\chi_{DP}(G') > \chi_{DP}(G)$, then $G'$ is partially DP-nice.
\end{pro}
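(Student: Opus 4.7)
The plan is to leverage the partial DP-niceness of $G$ directly by restricting an arbitrary cover of $G'$ to $V(G)$. Let $w$ denote the extra vertex (so $V(G') = V(G) \cup \{w\}$) and set $k := \chi_{DP}(G)$. First I would invoke Lemma~\ref{lem: one vertex} with $w$ as the deleted vertex to obtain $\chi_{DP}(G') \leq k+1$; together with the hypothesis $\chi_{DP}(G') > k$ this pins down $\chi_{DP}(G') = k+1$. For $t = k+1$, inequality~($*$) reads $\alpha_{k+1}^{DP}(G') = |V(G')|$, which holds by the definition of $\chi_{DP}$.

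For $t \in [k]$, let $\mathcal{H} = (L, H)$ be any $t$-fold cover of $G'$. I would form the restriction $\mathcal{H}^* = (L^*, H^*)$ by $L^*(v) = L(v)$ for $v \in V(G)$ and $H^* = H[\bigcup_{v \in V(G)} L(v)]$; conditions (1)--(4) carry over, so $\mathcal{H}^*$ is a $t$-fold cover of $G$. Since $G$ is partially DP-nice and $t \in [\chi_{DP}(G)]$, there is an independent set in $H^*$ of size at least $t|V(G)|/k$, which remains independent in $H$. Hence $\alpha_t^{DP}(G') \geq t|V(G)|/k$.

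The final step is a short arithmetic check: we need $t|V(G)|/k \geq t(|V(G)|+1)/(k+1) = t|V(G')|/\chi_{DP}(G')$, and cross-multiplying reduces this to $|V(G)| \geq k$, which holds for every graph because $\chi_{DP}(G) \leq \col(G) \leq |V(G)|$. I do not expect a serious obstacle here. The crucial conceptual point is that the hypothesis $\chi_{DP}(G') > \chi_{DP}(G)$ is precisely what raises the denominator in~($*$) from $k$ to $k+1$, providing exactly the slack needed to absorb the extra vertex $w$ by simply discarding $L(w)$. The more natural-looking alternative---choosing some $w' \in L(w)$ to include and then applying partial DP-niceness of $G$ at level $t-1$ to the $(t-1)$-fold cover of $G$ obtained after deleting the neighbors of $w'$---yields only $1 + (t-1)|V(G)|/k$, which falls short of the target $t|V(G')|/(k+1)$ whenever $t \leq k$ and $|V(G)| > k$, so the proof really must proceed by restriction rather than by extending a choice through $w$.
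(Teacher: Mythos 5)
Your proposal is correct and follows essentially the same route as the paper: fix $\chi_{DP}(G') = \chi_{DP}(G)+1$ via Lemma~\ref{lem: one vertex}, restrict an arbitrary $t$-fold cover of $G'$ to $\bigcup_{v \in V(G)} L(v)$, apply partial DP-niceness of $G$, and finish with the inequality $tn/k \geq t(n+1)/(k+1)$, which holds since $k = \chi_{DP}(G) \leq |V(G)| = n$. Your explicit handling of $t=1$ and $t=k+1$ and your closing remark on why the ``extend through $w$'' alternative falls short are sound additions, but the core argument is identical to the paper's.
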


\begin{proof}
Suppose that $\chi_{DP}(G) = m$.  By Lemma~\ref{lem: one vertex}, it must be that $\chi_{DP}(G') = m+1$.  Suppose that $t \in \N$ satisfies $2 \leq t \leq m$.  Also, suppose that $\mathcal{H} = (L,H)$ is a $t$-fold cover of $G'$.  Let $H' = H[ \bigcup_{v \in V(G)} L(v)]$.  Since $G$ is partially DP-nice, we know there is an independent set $I$ in $H'$ of size at least $tn/m$, where $n=|V(G)|$.  Since $n \geq m$, we know that
$$ \frac{tn}{m} \geq \frac{t(n+1)}{m+1}.$$
Consequently, $I$ is an independent set in $H$ of size at least $t(n+1)/(m+1)$.  The desired result follows.
\end{proof}

We know from~\cite{BK18} that for any graph $G$, there exists $\mu \in \N$ such that $\chi_{DP}(G\vee K_\mu) = \chi(G\vee K_\mu) = \chi(G) +\mu$, which is what led us to study whether for such graphs, partial $DP$-niceness might behave like its ordinary coloring analogue.

\begin{pro} \label{pro: one vertex plus}\footnote{This also follows from Theorem~2.1 of~\cite{BK18}, but we include an argument, including Lemma~\ref{lem: one vertex} and its proof, for completeness.}
For any graph $G$ and any $p\ge \mu$, $\chi_{DP}(G \vee K_p) = \chi(G) +p$.
\end{pro}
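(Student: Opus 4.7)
The plan is to prove the two inequalities $\chi_{DP}(G \vee K_p) \geq \chi(G) + p$ and $\chi_{DP}(G \vee K_p) \leq \chi(G) + p$ for every $p \geq \mu$. The first inequality is the easy one and holds for all $p$: since DP-coloring generalizes ordinary coloring, $\chi_{DP}(G \vee K_p) \geq \chi(G \vee K_p)$, and a standard argument using a proper coloring of $G$ together with $p$ fresh colors on the clique gives $\chi(G \vee K_p) = \chi(G) + p$. So the real content is the upper bound $\chi_{DP}(G \vee K_p) \leq \chi(G) + p$ for all $p \geq \mu$.

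I would establish this upper bound by induction on $p$, starting at $p = \mu$. The base case $p = \mu$ is exactly the hypothesis we are importing from~\cite{BK18}: $\chi_{DP}(G \vee K_\mu) = \chi(G) + \mu$. For the inductive step, assume $\chi_{DP}(G \vee K_p) \leq \chi(G) + p$ for some $p \geq \mu$, and observe that $G \vee K_{p+1}$ is obtained from $G \vee K_p$ by adding a single new vertex $v$ (the extra vertex of $K_{p+1}$) adjacent to every vertex of $G \vee K_p$. In other words, $(G \vee K_{p+1}) - v = G \vee K_p$. Applying the upper bound from Lemma~\ref{lem: one vertex} then yields
\[
\chi_{DP}(G \vee K_{p+1}) \leq \chi_{DP}(G \vee K_p) + 1 \leq \chi(G) + p + 1,
\]
completing the induction.

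Combining this with the matching lower bound gives $\chi_{DP}(G \vee K_p) = \chi(G) + p$ for all $p \geq \mu$, as desired. There is no real obstacle here; the argument is essentially bookkeeping, and the only nontrivial input is Lemma~\ref{lem: one vertex}, whose proof has already been given in the paper. The proposition should be viewed as a ``monotone extension'' of the \cite{BK18} threshold result: once the DP-chromatic number catches up to the chromatic number at $p = \mu$, adding further universal vertices one at a time preserves equality, because each such vertex can raise $\chi_{DP}$ by at most $1$ while it raises $\chi$ by exactly $1$.
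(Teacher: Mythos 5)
Your proof is correct and follows essentially the same route as the paper's: induction on $p$ with base case $p=\mu$ from~\cite{BK18}, the trivial lower bound $\chi_{DP}(G\vee K_p)\geq \chi(G\vee K_p)=\chi(G)+p$, and Lemma~\ref{lem: one vertex} applied to the new universal vertex for the upper bound in the inductive step. The only cosmetic difference is that you separate the two inequalities while the paper carries the equality through the induction directly; the substance is identical.
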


\begin{proof}
We know that $\chi_{DP}(G  \vee K_\mu) = \chi(G) +\mu$.
For a proof by induction, suppose that $\chi_{DP}(G \vee K_p) = \chi(G) +p$ for some $p\ge \mu$.  Then
$$\chi_{DP}(G\vee K_{p+1})\ge \chi(G\vee K_{p+1}) = \chi(G)+p+1 = \chi_{DP}(G\vee K_p) + 1.$$
We get $\chi_{DP}(G\vee K_{p+1})-1\le \chi_{DP}(G\vee K_p)$ by Lemma~\ref{lem: one vertex}. Therefore, $\chi_{DP}(G\vee K_{p+1}) = \chi(G) +p + 1$, as required.
\end{proof}

The rest of this paper will be devoted to proving Theorem~\ref{thm: join}.  If $G$ is a complete graph, then $G\vee K_p$ is itself a complete graph $K_q$.  Since $\alpha^{DP}_t(K_q)=t = t|V(K_q)|/\chi_{DP}(K_q)$, $K_q$ is partially DP-nice. Thus, we may assume that $G$ is an $n$-vertex graph that is not complete.  Note that then $\chi(G) < n$.

Let $G_0 = G$ and for each $p\ge 1$, let $G_p=G_{p-1}\vee K_1$.  Then $G_{p-1}$ is a subgraph of $G_p$, and $G_p$ is a copy of $G\vee K_p$ for all $p\ge 1$.
We may fix $\mu \in \N$ as in~\cite{BK18} and Proposition~\ref{pro: one vertex plus}.  Then for all $p\ge\mu$, we have $|V(G_p)|/\chi_{DP}(G_p) = (n+p)/(k+p)$, where we have let $k=\chi(G)$.

For each $p \geq \mu$, let $B_p$ be the set of $t\in[k+p]$ for which $\alpha_t^{DP}(G_p) < t(n+p)/(k+p)$.
In other words, $t\in B_p$ if and only if there is a $t$-fold cover $\mathcal{H} = (L,H)$ of $G_p$ such that $H$ has no independent set $S$ with $|S|\ge t(n+p)/(k+p)$.  We know that $B_p$ does not contain $1$ or $k+p$.
Note that $G_p$ is partially DP-nice if and only if $B_p = \emptyset$.

\begin{pro} \label{pro: subset}
If $p \geq \mu$, $B_{p+1} \subseteq B_p$.
\end{pro}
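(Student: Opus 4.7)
My plan is to prove the contrapositive: if $t\notin B_p$, then $t\notin B_{p+1}$. The key observation is that since $G$ was reduced to a non-complete graph in the setup of Theorem~\ref{thm: join}, we have $k=\chi(G)<n=|V(G)|$, and cross-multiplying gives $(n+p+1)(k+p)-(n+p)(k+p+1)=k-n<0$, so
\[
\frac{n+p+1}{k+p+1} \,<\, \frac{n+p}{k+p}.
\]
Thus the benchmark $t(n+p+1)/(k+p+1)$ that a cover must beat to place $t$ outside $B_{p+1}$ is strictly smaller than the corresponding benchmark $t(n+p)/(k+p)$ for $B_p$.

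To execute: given any $t$-fold cover $\mathcal{H}=(L,H)$ of $G_{p+1}$, I would restrict to $V(G_p)$ by letting $L'$ be $L$ restricted to $V(G_p)$ and $H'=H[\bigcup_{u\in V(G_p)} L(u)]$; then $\mathcal{H}'=(L',H')$ is a $t$-fold cover of $G_p$. Assuming $t\notin B_p$, we have $\alpha(H')\ge\alpha_t^{DP}(G_p)\ge t(n+p)/(k+p)$. Since $H'$ is an induced subgraph of $H$, $\alpha(H)\ge\alpha(H')>t(n+p+1)/(k+p+1)$ by the inequality above. Because $\mathcal{H}$ was arbitrary, this gives $\alpha_t^{DP}(G_{p+1})>t(n+p+1)/(k+p+1)$, so $t\notin B_{p+1}$. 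A brief boundary check handles $t=k+p+1$: this value equals $\chi_{DP}(G_{p+1})$ and hence never lies in $B_{p+1}$, so every $t\in B_{p+1}$ automatically lies in $[k+p]$, the index set for $B_p$.

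There is essentially no obstacle: once one notices that the ratio $(n+p)/(k+p)$ is decreasing in $p$ (which is exactly where $k<n$ is used), the inclusion follows by the trivial embedding of independent sets of $H'$ as independent sets of $H$. In particular, no extension argument involving a choice of vertex in $L(v)$ for the new vertex $v\in V(G_{p+1})\setminus V(G_p)$ is required, nor is the subadditivity lemma of Section~\ref{subadd}.
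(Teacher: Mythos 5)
Your proof is correct and is essentially the paper's argument in contrapositive form: the paper assumes $t\in B_{p+1}-B_p$ for contradiction, restricts the cover of $G_{p+1}$ to $V(G_p)$ exactly as you do, and uses the same inequality $t(n+p)/(k+p)\geq t(n+p+1)/(k+p+1)$ (from $k<n$) to contradict the defining property of the cover. Your explicit boundary check that $k+p+1\notin B_{p+1}$ is a point the paper handles implicitly via its earlier remark that $B_p$ never contains $k+p$, so no substantive difference remains.
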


\begin{proof}
Suppose for the sake of contradiction that there exists $t \in B_{p+1} - B_p$.  We know that $1 < t < k+p+1$.  There must be a $t$-fold cover $\mathcal{H} = (L,H)$ of $G_{p+1}$ such that $H$ has no independent set $S$ with $|S|\ge t(n+p+1)/(k+p+1)$. Let $H' = H[ \bigcup_{v \in V(G_p) } L(v)]$, let $L'$ be $L$ restricted to $V(G_p)$, and let $\mathcal{H}'=(L',H')$; then $\mathcal{H}'$ is a $t$-fold cover of $G_p$.  Since $t \notin B_p$ and $t\le k+p$, there must be an independent set $S$ of size at least $t(n+p)/(k+p)$ in $H'$.  Note that $S$ is an independent set in $H$ as well and
$$\frac{t(n+p)}{k+p} \geq \frac{t(n+p+1)}{k+p+1}$$ since $k < n$, which is a contradiction.
\end{proof}

Now we show that there exists $p$ such that $B_p=\emptyset$, which will complete the proof of the theorem.

\begin{customthm} {\bf \ref{thm: join}}
	For any graph $G$, there exists $p \in \N$ such that $G \vee K_p$ is partially DP-nice.
\end{customthm}

\begin{proof}
If not, then by Proposition~\ref{pro: subset} there exists $t\in B_p$ for all $p\ge \mu$.
Since $\lim_{p \rightarrow \infty} (n+p)/(k+p) = 1$, there exists $p\ge \mu$ such that
$$\frac{t(n + p)}{k+p} < t+1.$$ Since $t\in B_p$, $\alpha^{DP}_t(G_p)<t+1$.  So, there is a $t$-fold cover $\mathcal{H} = (L,H)$ of $G_p$ with no independent set of size $t+1$.  Since $1<t<k+p+1$ and $k < n$, we have $t+1\le k +p+1 \le n+p=|V(G_p)|$.
Since $G_p$ is not a complete graph, $G_p$ has an induced subgraph $G'$ on $t+1$ vertices that is not a complete graph.
The coloring number of $G'$ is at most $t$, so for every $t$-fold cover $\mathcal{H}'$ of $G'$, there is an $\mathcal{H}'$-coloring.  In particular, $H$ has an independent set of size $t+1$, which is a contradiction.
\end{proof}


\end{document}